% Début du document
\documentclass[11pt]{article} 
\usepackage[margin=1in]{geometry}
\usepackage{lmodern}
\usepackage[utf8]{inputenc}
\usepackage{microtype}
\usepackage{hyperref}
\usepackage{subfiles}

%Environnement mathématique
\usepackage{amsmath,amssymb,amsthm,mathtools}
\usepackage{tikz-cd}

\theoremstyle{plain}
\newtheorem{theorem}{Theorem}[section]
\newtheorem{proposition}[theorem]{Proposition}
\newtheorem{lemma}[theorem]{Lemma}
\newtheorem{corollary}[theorem]{Corollary}

\theoremstyle{definition}
\newtheorem{definition}[theorem]{Definition}

\newtheorem{remark}[theorem]{Remark}
\newtheorem{question}[theorem]{Question}

% Langue
\usepackage[english]{babel}

%Mes commandes mathématiques
\newcommand{\N}{\mathbb{N}}
\newcommand{\Z}{\mathbb{Z}}
\newcommand{\Q}{\mathbb{Q}}
\newcommand{\R}{\mathbb{R}}

\renewcommand{\epsilon}{\varepsilon}
\newcommand{\act}{\curvearrowright}

% Comment déclarer des opérateurs
\DeclareMathOperator{\LO}{LO}
\DeclareMathOperator{\Ar}{Ar}
\DeclareMathOperator{\Aut}{Aut}
\DeclareMathOperator{\SL}{SL}
\DeclareMathOperator{\GL}{GL}

\DeclareMathOperator{\LI}{LI}
\DeclareMathOperator{\id}{id}

% Des environnements de couleurs
\usepackage{xcolor}

% Titre, auteur, date
\title{Borel Complexity of the Isomorphism Relation of Archimedean Orders in Finitely Generated Groups}
\author{Antoine Poulin}

\begin{document}
\maketitle
\abstract{In 2020, Calderoni, Marker, Motto Ros and Shani asked what the Borel complexity of the isomorphism relation of Archimedean orders on $\Q^n$ is. We answer this question by proving that the isomorphism relation of Archimedean orders on $\Z^n$ is not hyperfinite when $n \geq 3$ and not treeable when $n \geq 4$. As a corollary, we get that the isomorphism relation of Archimedean orders on $\Q^n$ is not hyperfinite when $n \geq 3$ and not treeable when $n \geq 4$.

\section{Introduction}
The study of invariant orders on groups has connections to algebra, dynamics and low-dimensional topology. For example, every countable group admitting a left-invariant order (called a left-order) admits an action on the real line $\R$ by orientation-preserving homeomorphisms. Different types of orders can be considered, such as Conradian orders, the existence of which is equivalent to local indicability, a purely algebraic condition. Recently, tools from descriptive set theory, namely from Borel complexity and reducibility, have been successfully applied to the space of left-orders on a group $G$, along with actions by $\Aut(G)$ and its restriction to an action of $G$ by conjugacy. A (slightly reformulated) question of Deroin, Navas, and Rivas asks whether the relation of conjugacy of left-orders is always smooth, i.e that it admits complete invariants. This was answered in the negative by Filippo Calderoni and Adam Clay in \cite{calderoni_borel_2023}, who showed that free groups have universal conjugacy relations on their space of left-orders. Further, they showed that if this relation is smooth, the group must admit a Conradian order. They expanded upon their techniques in \cite{calderoni_borel_2023}, where a connection to the L-space conjecture of low-dimensional topology is presented. In their most recent work \cite{calderoni_condensation_2023}, they show that the conjugacy relation of left-orders on the Baumslag--Solitar group $BS(1,2)$ is hyperfinite non-smooth, the first example of a finitely generated group for which this relation is not smooth or universal. In a paper by Calderoni, Marker, Motto Ros and Shani \cite{calderoni2023anticlassification}, it is shown that the isomorphism relation of Archimedean orders on $\Q^2$ is not smooth. It follows from their aguments that the isomorphism relation of Archimedean orders on $\Z^2$ is hyperfinite (See \cite{poulin}). In \cite[Question 2.9]{calderoni2023anticlassification}, the authors ask what the Borel complexity of the isomorphism relation of Archimedean orders on $\Q^n$ is. We answer this question by proving the following theorems about $\Z^n$:

\begin{theorem}\label{intro:thm1}
The isomorphism relation of Archimedean orders on $\Z^n$ is not hyperfinite if $n \geq 3$.
\end{theorem}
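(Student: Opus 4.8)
The plan is to recognize the isomorphism relation as the orbit equivalence relation of a linear group action, and then to detect non-hyperfiniteness through the \emph{non-amenability of the action} (and not merely of the group) — which is precisely what separates the case $n \geq 3$ from the hyperfinite case $n = 2$. First I would use Hölder's theorem to parametrize the orders: every Archimedean order on $\Z^n$ is realized by an order-embedding $\Z^n \hookrightarrow (\R,+,<)$, determined by the tuple $\alpha = (\alpha_1,\dots,\alpha_n) \in \R^n$ of images of the standard generators, where $v > 0$ iff $\langle v,\alpha\rangle > 0$. Such a tuple gives an embedding exactly when it is $\Q$-linearly independent, and two tuples give the same order iff they are positive scalar multiples. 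Thus the space of Archimedean orders is identified with the set $X \subseteq \P^{n-1}(\R)$ of $\Q$-linearly independent lines, and the isomorphism relation induced by $\Aut(\Z^n) = \GL_n(\Z)$ is exactly the orbit equivalence relation of the (dual) linear action $\GL_n(\Z) \act X$. Since the $\SL_n(\Z)$-orbit relation is a subrelation of the $\GL_n(\Z)$-orbit relation, and subrelations of hyperfinite relations are hyperfinite, it suffices to prove that the orbit equivalence relation of $\SL_n(\Z) \act X$ is not hyperfinite.

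Next I would equip $\P^{n-1}(\R)$ with the round (Lebesgue) measure $\mu$. The complement of $X$ is a countable union of rational hyperplanes, hence $\mu$-null, so $X$ is conull. The linear $\SL_n(\R)$-action on $\P^{n-1}(\R)$ preserves the Lebesgue measure class, so $\mu$ is quasi-invariant for $\SL_n(\Z)$; moreover the action is essentially free, since a nontrivial $\gamma \in \SL_n(\Z)$ can fix only its (at most $n$) real eigendirections, and the union of these finitely many lines over the countably many $\gamma$ is $\mu$-null. For an essentially free quasi-invariant action, the orbit equivalence relation is $\mu$-amenable if and only if the action is amenable in Zimmer's sense, and by Connes--Feldman--Weiss $\mu$-amenability is equivalent to $\mu$-hyperfiniteness. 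As a hyperfinite relation is $\mu$-hyperfinite for every quasi-invariant measure, it is therefore enough to show that the action $\SL_n(\Z) \act (\P^{n-1}(\R),\mu)$ is \emph{not} Zimmer-amenable.

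This last point is the crux. I would identify $\P^{n-1}(\R)$ with the homogeneous space $\SL_n(\R)/P$, where $P$ is the stabilizer of a line, a maximal parabolic subgroup. By Zimmer's theorem the transitive action $\SL_n(\R) \act \SL_n(\R)/P$ is amenable if and only if $P$ is amenable, and restriction to the lattice $\SL_n(\Z)$ preserves non-amenability of the action. For $n = 2$ the stabilizer $P$ is a Borel subgroup, solvable and hence amenable, which recovers the known hyperfiniteness. For $n \geq 3$, however, $P$ contains a copy of $\SL_{n-1}(\R)$ and is therefore non-amenable, so the action is non-amenable and the orbit relation is not $\mu$-hyperfinite, hence not hyperfinite.

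The main obstacle is precisely establishing this non-amenability. The naive observation that $\SL_n(\Z)$ is a non-amenable group does not suffice: in the $n = 2$ case the boundary action is free and its acting group is non-amenable, yet the relation is hyperfinite. The argument must genuinely engage with the amenability of the parabolic stabilizer, either through Zimmer's homogeneous-space characterization as above, or, alternatively, through property (T) of $\SL_n(\Z)$ for $n \geq 3$: the action is ergodic by Moore's theorem since $P$ is noncompact, so if it were amenable it would be essentially transitive, contradicting the non-atomicity of $\mu$.
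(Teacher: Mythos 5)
Your proposal follows essentially the same route as the paper: parametrize $\Ar(\Z^n)$ via H\"older's theorem as totally irrational directions modulo scalars, pass to the subrelation generated by $\SL_n(\Z)$, convert hyperfiniteness into Zimmer-amenability of the action using essential freeness together with Connes--Feldman--Weiss, and kill amenability by exhibiting a non-amenable stabilizer for the ambient transitive $\SL_n(\R)$-action and then restricting to the lattice. The only structural difference is where the measure theory happens: the paper works on $\R^n$ itself with the invariant Lebesgue measure (stabilizer of a vector, which contains $\SL_2(\R)$), reaching it from the scalar quotient via a class-bijective homomorphism, while you work on $\P^{n-1}(\R)$ with the quasi-invariant round measure (parabolic stabilizer of a line, which contains $\SL_{n-1}(\R)$). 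These are interchangeable.

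One step as written is wrong, though easily repaired. An Archimedean order corresponds to a \emph{ray} $\R_{>0}\alpha$, not a line: $\alpha$ and $-\alpha$ give opposite orders, so your identification with a subset of $\P^{n-1}(\R)$ is two-to-one (harmless for complexity, since $-I \in \GL_n(\Z)$ makes the ray relation and the line relation Borel bireducible, but it should be said). More seriously, the action $\SL_n(\Z) \act \P^{n-1}(\R)$ is \emph{not} essentially free when $n$ is even: $-I \in \SL_n(\Z)$ fixes every line, and your eigendirection count misses it. Since essential freeness is exactly what lets you pass from amenability of the orbit equivalence relation to Zimmer-amenability of the action, the chain breaks for $n = 4, 6, \dots$ as stated. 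The fix is to work with rays, i.e.\ the sphere modulo positive scalars (equivalently, replace $\SL_n(\Z)$ by $\SL_n(\Z)/\{\pm I\}$ on $\P^{n-1}$): a matrix fixing a ray must have a positive eigenvalue along it, so every nontrivial element of $\SL_n(\Z)$ fixes only a null set of rays, and the rest of your argument goes through verbatim. This sign issue is precisely why the paper works with $\left(\R^n\right)^\ast_{\text{ti}}/\R_{>0}$ and then lifts all the way up to $\R^n_{\text{ti}}$, where freeness is clean.
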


\begin{theorem}\label{intro:thm2}
The isomorphism relation of Archimedean orders on $\Z^n$ is not treeable if $n \geq 4$.
\end{theorem}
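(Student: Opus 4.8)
The plan is to realize the isomorphism relation as an orbit equivalence relation of a linear group action, and then to isolate inside it a measure-preserving subrelation whose non-treeability is already known. By Hölder's theorem an Archimedean order on $\Z^n$ is the same thing as an injective homomorphism $\Z^n \to \R$ up to positive rescaling, hence is encoded by a point $[v] \in \mathbb{RP}^{n-1}$ whose homogeneous coordinates $v_1,\dots,v_n$ are $\Q$-linearly independent (this independence is exactly what makes the induced order total). Two orders are isomorphic precisely when the corresponding lines lie in a common $\GL_n(\Z)$-orbit, so the isomorphism relation $E$ is Borel isomorphic to the orbit equivalence relation of the projectivized linear action of $\GL_n(\Z)$ on the conull Borel set $X_n\subseteq\mathbb{RP}^{n-1}$ of lines with $\Q$-independent coordinates. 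The difficulty is that this projective action preserves no finite measure, so the powerful machinery for probability-measure-preserving (p.m.p.) equivalence relations cannot be applied to $E$ directly.

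To circumvent this I would restrict to the affine chart $\{v_n=1\}$, which identifies the irrational points there with a conull Borel subset $Y\subseteq\R^{n-1}$, and to the subgroup $\Gamma\le\GL_n(\Z)$ of matrices $\bigl(\begin{smallmatrix}B & c\\ 0 & 1\end{smallmatrix}\bigr)$ with $B\in\SL_{n-1}(\Z)$ and $c\in\Z^{n-1}$. On the chart such a matrix acts by the affine map $w\mapsto Bw+c$, so $\Gamma\cong\SL_{n-1}(\Z)\ltimes\Z^{n-1}$ acts affinely on $\R^{n-1}$. Crucially, since $\det B=1$ this action is Lebesgue-measure-preserving, and quotienting by the translation lattice $\Z^{n-1}$ (with $[0,1)^{n-1}$ as a Borel fundamental domain) exhibits the resulting orbit relation $F$ as Borel isomorphic to the Haar-measure-preserving action $\SL_{n-1}(\Z)\act\mathbb{T}^{n-1}$. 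By construction $F$ is a Borel subequivalence relation of $E$.

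It remains to feed $F$ into the structure theory of p.m.p.\ relations. For $n\ge 4$ we have $m:=n-1\ge 3$, so $\SL_m(\Z)$ has Kazhdan's property (T); moreover the action $\SL_m(\Z)\act\mathbb{T}^m$ is ergodic and essentially free, since for each $A\neq\id$ the fixed set $\{x:(A-\id)x\in\Z^m\}$ lies in a proper affine subspace and is therefore null. The theorem of Adams and Spatzier then guarantees that a free ergodic p.m.p.\ action of a property (T) group generates a non-treeable equivalence relation, so $F$ is not measure-treeable. (For the weaker Theorem \ref{intro:thm1} one takes only $m\ge 2$, uses that $\SL_m(\Z)$ is non-amenable, and concludes via Connes--Feldman--Weiss together with Zimmer's amenability criterion that $F$ is non-hyperfinite.)

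Finally I would transfer the conclusion to $E$. Since treeability passes to Borel subequivalence relations (Jackson--Kechris--Louveau) and Borel treeability implies $\mu$-treeability for every measure, if $E$ were treeable then $F\subseteq E$ would be Borel treeable, hence measure-treeable with respect to the natural invariant measure, contradicting the previous paragraph; thus $E$ is not treeable for $n\ge 4$. The step I expect to be the main obstacle is precisely the passage in the second paragraph: converting the non-measure-preserving projective $\GL_n(\Z)$-action into an honest p.m.p.\ subrelation. The determinant-one affine subgroup is what makes this work, but its use has to be accompanied by the bookkeeping that $Y$ is a conull Borel set, that the chart construction yields a genuine \emph{sub}relation of $E$, and that essential freeness and ergodicity of the toral action hold on a conull set, so that the measure-theoretic non-treeability can be upgraded to the Borel statement.
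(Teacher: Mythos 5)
Your proof is correct, but the route to non-treeability is genuinely different from the paper's. Both arguments begin the same way, using H\"older's theorem to identify $\Ar(\Z^n)$ with the totally irrational functionals modulo positive scalars (your projective-space formulation conflates an order with its reverse, but since $-\id\in\GL_n(\Z)$ the orbit relations on rays and on lines are Borel bireducible, so nothing is lost). After that the paper un-projectivizes: it shows the quotient map $\left(\R^n\right)^\ast_\text{ti}\rightarrow\left(\R^n\right)^\ast_\text{ti}/\R_{>0}$ is a class-bijective homomorphism, pulls treeability back to $E(\SL_n(\Z)\act\R^n_\text{ti})$ via Proposition \ref{prop:complexclassCK}, and then invokes the Popa--Vaes theorem (Proposition \ref{prop:propTpopa}) that the restriction of this relation to the unit cube has property (T) exactly when $n\geq 4$, followed by Adams--Spatzier. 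You instead stay inside the projectivized relation and locate a probability-measure-preserving subrelation directly: the stabilizer of the affine chart $\{v_n=1\}$ contains $\SL_{n-1}(\Z)\ltimes\Z^{n-1}$ acting affinely, and restricting to the fundamental domain $[0,1)^{n-1}$ realizes $E(\SL_{n-1}(\Z)\act\mathbb{T}^{n-1})$ (on its conull totally irrational part) as a subrelation of the isomorphism relation. Non-treeability then follows from the classical property (T) of $\SL_m(\Z)$ for $m=n-1\geq 3$ together with the original Adams--Spatzier theorem for free ergodic p.m.p.\ actions, plus the facts (Proposition \ref{prop:complexclassJKL}) that treeability passes to restrictions and subrelations and that a Borel treeing is in particular an a.e.\ treeing. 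Your approach buys independence from both the class-bijective homomorphism machinery and the Popa--Vaes computation, at the cost of losing one dimension; the two thresholds happen to agree, so you still get exactly $n\geq 4$ (and the same device with the non-amenable $\SL_2(\Z)\act\mathbb{T}^2$ recovers Theorem \ref{intro:thm1} for $n\geq 3$). The one point to state more carefully is essential freeness of $\SL_{n-1}(\Z)\act\mathbb{T}^{n-1}$: the fixed-point set of $A\neq\id$ is a countable union of proper affine subspaces rather than a single one, but it is still null, and on the totally irrational points the action is in fact genuinely free.
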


In particular, since the isomorphism relation of Archimedean orders on $\Z^n$ can be realized as a sub-equivalence relation of the  isomorphism relation of Archimedean orders on $\Q^n$, we get the following corollary, which answers \cite[Question 2.9]{calderoni2023anticlassification}:  

\begin{corollary}\label{cor-intro}
The isomorphism relation of Archimedean orders on $\Q^n$ is not hyperfinite if $n \geq 3$ and not treeable if $n \geq 4$.
\end{corollary}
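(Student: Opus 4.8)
The plan is to derive the corollary formally from Theorems~\ref{intro:thm1} and~\ref{intro:thm2}, using the fact that both hyperfiniteness and treeability are inherited by Borel subequivalence relations, together with a Borel identification realizing the $\Z^n$-isomorphism relation as a subequivalence relation of the $\Q^n$-isomorphism relation. Write $E_{\Z^n}$ and $E_{\Q^n}$ for the isomorphism relations on $\Ar(\Z^n)$ and $\Ar(\Q^n)$ respectively. Since $\Q^n$ is the divisible hull of $\Z^n$, I would first show that every Archimedean order on $\Z^n$ extends \emph{uniquely} to an Archimedean order on $\Q^n$: setting $q > 0$ if and only if $mq > 0$ in $\Z^n$ for some (equivalently, every) positive integer $m$ with $mq \in \Z^n$ defines an invariant order, which one checks is Archimedean and is the only Archimedean extension, both points being transparent via the H\"older embedding into $(\R,+)$. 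This defines a map $\iota \colon \Ar(\Z^n) \to \Ar(\Q^n)$; restriction of a $\Q^n$-order to $\Z^n$ is a two-sided inverse, and both maps are Borel, so $\iota$ is a Borel bijection.

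Next I would check equivariance. An isomorphism $(\Z^n,<_1)\to(\Z^n,<_2)$ of ordered groups is precisely an element of $\Aut(\Z^n)=\GL_n(\Z)$ sending $<_1$ to $<_2$; viewing such an element inside $\Aut(\Q^n)=\GL_n(\Q)$, it sends the extension $\iota(<_1)$ to $\iota(<_2)$, because passing to the divisible hull commutes with linear automorphisms. Hence $x \mathrel{E_{\Z^n}} y$ implies $\iota(x)\mathrel{E_{\Q^n}}\iota(y)$, so the transported relation $F := (\iota\times\iota)(E_{\Z^n})$ satisfies $F \subseteq E_{\Q^n}$. As $\GL_n(\Z)$ and $\GL_n(\Q)$ are countable, $E_{\Z^n}$, $F$ and $E_{\Q^n}$ are countable Borel equivalence relations, and $F$ is Borel isomorphic to $E_{\Z^n}$ via $\iota$; in particular $F$ is not hyperfinite for $n\geq 3$ and not treeable for $n\geq 4$, by the two theorems.

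Finally I would invoke the closure principles. A Borel subequivalence relation of a hyperfinite equivalence relation is hyperfinite: if $E_{\Q^n}=\bigcup_k F_k$ is an increasing union of finite Borel equivalence relations, then $F=\bigcup_k (F\cap F_k)$ realizes $F$ the same way. A Borel subequivalence relation of a treeable countable Borel equivalence relation is treeable, by Jackson--Kechris--Louveau. Taking contrapositives of these two principles applied to $F\subseteq E_{\Q^n}$: for $n\geq 3$, were $E_{\Q^n}$ hyperfinite then $F$ would be, contradicting Theorem~\ref{intro:thm1}; and for $n\geq 4$, were $E_{\Q^n}$ treeable then $F$ would be, contradicting Theorem~\ref{intro:thm2}. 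This gives the corollary.

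The two closure principles are standard, so the only step requiring genuine care is the construction of $\iota$: verifying that the Archimedean extension is unique and that $\iota$ together with its inverse are Borel and intertwine the $\GL_n(\Z)$-action on $\Ar(\Z^n)$ with the restriction of the $\GL_n(\Q)$-action on $\Ar(\Q^n)$. I expect this to be the main, though modest, obstacle; once it is in place the corollary follows purely formally from the two theorems.
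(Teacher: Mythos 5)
Your proposal is correct and follows essentially the same route as the paper: both realize the isomorphism relation on $\Ar(\Z^n)$ as a Borel subequivalence relation of the one on $\Ar(\Q^n)$ and conclude by closure of hyperfiniteness and treeability under subequivalence relations (Proposition~\ref{prop:complexclassJKL}). The only cosmetic difference is that you build the embedding intrinsically via the unique Archimedean extension to the divisible hull, whereas the paper identifies both $\Ar(\Z^n)$ and $\Ar(\Q^n)$ with $\left(\R^n\right)^\ast_{\text{ti}}/\R_{>0}$ via H\"older's theorem and observes that the $\GL_n(\Z)$-orbit relation sits inside the $\GL_n(\Q)$-orbit relation --- the same identification in different language.
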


In unpublished notes, Filippo Calderoni and Adam Clay showed that the isomorphism relation of Archimedean orders on $\Q^n$ is not smooth for $n \geq 3$. Corollary \ref{cor-intro} gives a significant strenghtening of this.

The strategy throughout is to tie the complexity of isomorphism relation of Archimedean orders on $\Z^n$ to that of $\SL_n(\Z)$ acting on $\R^n$. This is not done solely through reductions, but also through the use of class-preserving bijections. Here, the Archimedean condition translates to studying the action on a co-null set. We then apply measure-theoretic results by Zimmer \cite{zimmer_induced_1978} and Popa--Vaes \cite{popa_cocycle_2008}. In the proof of Theorem \ref{intro:thm1},  since the equivalence relation is an essentially free action of $\SL_n(\Z)$, this allows us to translate hyperfiniteness into Zimmer's notion of amenable actions. We then use that $\SL_n(\Z)$ is a lattice of $\SL_n(\R)$ see that amenability of the two actions are equivalent. Then, amenability of the (essentially-transitive) action of $\SL_n(R)$ is equivalent to amenability of the stabilizer group. Thus, the complexity of the essentially free action of $\SL_n(\Z)$ is still dictated (even almost everywhere) by the stabilizer group of a null set.

An immediate corollary of H\"{o}lder's theorem on Archimedean ordered groups (see Theorem \ref{thm:Holder}) implies that all finitely generated groups admitting Archimedean orders are isomorphic to $\Z^n$ for some $n$. Thus, Theorem \ref{intro:thm1} and \ref{intro:thm2} can be interpreted as a statement about all finitely generated Archimedean groups.

\subsection*{Acknowledgements}

We thank Marcin Sabok and Anush Tserunyan for their guidance. We thank Bachir Bekka, Filippo Calderoni, Damien Gaboriau and Samuel Mellick for helpful discussions and direction. We thank Filippo Calderoni for comments.

\section{Background and results}
\subsection{Orders on groups}
\subsubsection*{Left-orders on groups}
A group $G$ is \textbf{left-orderable} if there exists a total linear order $<$ on $G$ such that
\[gh < gk \Longleftrightarrow h < k.\]
In this case, $<$ is called a \textbf{left-order}. A \textbf{positive cone} $P$ is a subset $P \subset G$ such that:
\begin{itemize}
\item $P$ is closed under multiplication: $P \cdot P = P$.
\item $G$ splits as a disjoint union of $P$, $\{\id\}$ and $P^{-1} := \left\{g^{-1} : g \in P \right\}$.
\end{itemize}
There is a one-to-one correspondence between left-orders and positive cones, given by
\begin{align*}
\id < g &\Longrightarrow g \in P(<),\\
g^{-1}h \in P &\Longrightarrow g <_P h.
\end{align*}
We freely associate between a pair of associated $<$ and $P(<)$. This correspondance allows us to define the \textbf{space of left-orders} on $G$ as 
\[\LO(G) := \left\{P \subset G: P \text{ is a positive cone} \right\}.\]
This is a closed subset of $2^{G}$, hence a compact subspace with the product topology. If $G$ is countable, then $\LO(G)$ is separable.

\subsubsection*{Archimedean order on groups}

An order $P(<) \in \LO(G)$ is \textbf{Archimedean} if for all $g,h > \id$, there is $n \in \N$ such that $g < h^n$. Informally, if $h^n < g$ for all $n$, we say that $h$ is infinitely small with respect to $g$. The \textbf{space of Archimedean orders} is 
\[\Ar(G) := \left\{P \in \LO(G): P \text{ is Archimedean} \right\}.\]
This is a $G_\delta$ subset of $\LO(G)$, that is a countable intersection of open sets.

\begin{theorem}[Hölder's theorem, Theorem 2.6 in \cite{clay_ordered_2016}]\label{thm:Holder}
Every Archimedean ordered group is isomorphic as an ordered group to a subgroup of $\R$.
\end{theorem}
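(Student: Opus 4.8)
The plan is to realize the order-embedding explicitly by measuring every element against a fixed positive ``yardstick''. Assuming $G \neq \{\id\}$, fix some $a > \id$. If the order has a least positive element $c$, I would first dispose of this case directly: every positive $g$ satisfies $c^k \le g < c^{k+1}$ for a unique $k$, and minimality of $c$ forces $c^{-k}g = \id$, so $G = \langle c\rangle \cong \Z$. Otherwise the order is dense, and for $g > \id$ and each $n \ge 1$ I would let $k_n(g)$ be the unique integer with $a^{k_n(g)} \le g^n < a^{k_n(g)+1}$ (well defined: the defining set is an initial segment of $\Z$, bounded above by the Archimedean property applied to $a$ and $g^n$). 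The candidate embedding is then
\[
\phi(g) = \lim_{n \to \infty} \frac{k_n(g)}{n}, \qquad \phi(g^{-1}) = -\phi(g), \qquad \phi(\id) = 0,
\]
equivalently $\phi(g) = \sup\{ m/n : a^m \le g^n \}$.

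The step I expect to be the main obstacle is \emph{commutativity}: a priori the order is only left-invariant, and for a merely left-invariant order neither $g \le h \Rightarrow g^n \le h^n$ nor $(gh)^n = g^n h^n$ need hold, so every estimate below breaks down. The real content of the Archimedean hypothesis is therefore the lemma that the positive cone $P$ is normal, i.e.\ $hPh^{-1} = P$ for all $h \in G$; this is equivalent to the order being bi-invariant, and it then yields that $G$ is abelian. I would prove normality of $P$ by contradiction: if $g > \id$ while $hgh^{-1} < \id$ for some $h$, I would iterate conjugation and compare the resulting elements to powers of $g$ (or $a$), using that no nontrivial element is infinitely small with respect to another, to force $g \le \id$. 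Granting this, powers become monotone and $n$-th powers are multiplicative, which is exactly what the construction needs.

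With bi-invariance in hand the remaining verifications are routine limiting arguments. Existence of the limit follows from a Fekete (superadditivity) argument: from $a^{k_m(g)} \le g^m$ and $a^{k_n(g)} \le g^n$ and bi-invariance one gets $a^{k_m(g)+k_n(g)} \le g^{m+n}$, so $k_{m+n}(g) \ge k_m(g)+k_n(g)$ and $\lim_n k_n(g)/n = \sup_n k_n(g)/n$ exists; it is finite because $g < a^N$ for some $N$ forces $g^n < a^{Nn}$ and hence $k_n(g) < Nn$. The same two-sided comparison of $(gh)^n = g^n h^n$ against powers of $a$ yields that $|k_n(gh) - k_n(g) - k_n(h)|$ is bounded, so after dividing by $n$ and passing to the limit $\phi(gh) = \phi(g) + \phi(h)$, and $\phi$ is a homomorphism into $(\R,+)$. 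Finally $\phi$ is strictly order-preserving, hence injective: if $g > \id$ then by the Archimedean property $a < g^n$ for some $n$, so $k_n(g) \ge 1$ and $\phi(g) \ge 1/n > 0$; applying this to $g^{-1}h$ shows $g < h \Rightarrow \phi(g) < \phi(h)$. Thus $\phi$ is an isomorphism of ordered groups onto its image in $\R$.
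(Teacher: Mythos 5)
The paper does not prove this statement; it imports it verbatim from Clay--Rolfsen (Theorem 2.6 there), so there is no internal proof to compare against. Your outline is the classical Hölder argument --- dispose of the discrete case, prove the positive cone is normal so the order is bi-invariant, then build $\phi(g)=\lim_n k_n(g)/n$ by a Fekete argument --- and the steps you label routine (well-definedness of $k_n$, finiteness of the limit, strict monotonicity, the discrete case) really are routine and correct as written.

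The one genuine gap is the clause asserting that normality of $P$ ``then yields that $G$ is abelian.'' Bi-invariance alone does not imply abelian (free groups are bi-orderable), and abelianness of an Archimedean bi-ordered group is essentially the conclusion of Hölder's theorem, normally obtained \emph{from} the embedding rather than fed into it; yet your additivity step leans on commutativity in an essential way, via $(gh)^n=g^nh^n$. As written this is circular. The standard repair keeps your construction intact but replaces that identity with a two-sided estimate valid in any bi-ordered group: for $g,h>\id$ with (say) $gh\le hg$ one has $g^nh^n\le (gh)^n\le h^ng^n$, proved by induction from $gh^n\le h^ng$, which is itself a consequence of $gh\le hg$. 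Since both $g^nh^n$ and $h^ng^n$ are trapped between $a^{k_n(g)+k_n(h)}$ and $a^{k_n(g)+k_n(h)+2}$, you still get $k_n(g)+k_n(h)\le k_n(gh)\le k_n(g)+k_n(h)+1$, hence $\phi(gh)=\phi(g)+\phi(h)$, and commutativity of $G$ then follows \emph{a posteriori} from injectivity of $\phi$ into $\R$. With that substitution (and the normality lemma actually written out --- your sketch of it is only a gesture, though the idea is the right one), the proof is complete.
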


\subsubsection*{Actions on Orders}

\begin{definition}
Let $(G, <)$ be a left-ordered group and $\phi \in \Aut(G)$. There is a left-order $\phi \cdot <$ such that 
\[g (\phi \cdot <) h \Longleftrightarrow \phi^{-1}(g) < \phi^{-1}(h) \]
This yields an action $\Aut(G) \curvearrowright \LO(G)$. If $<$ is Archimedean, then so is $\phi \cdot <$. Thus, we have an action $\Aut(G) \curvearrowright \Ar(G)$.
\end{definition}

\subsection{Linear actions and contructions}
\subsubsection*{Linear actions on Euclidean space and on its dual}

We denote the dual of $\R^n$, that is the space of linear functionals on $\R^n$, by
\[\left (\R^n\right )^\ast = \left\{ f: \R^n \rightarrow \R \mathbin{|} f \text{ is linear} \right\}. \] 
There is an induced action $\GL_n(\Z) \act \left (\R^n\right )^\ast$ by $T\cdot f = f \circ T^{-1}$. There is a linear isomorphism $\widehat{\cdot}: \R^n \rightarrow \left (\R^n\right )^\ast$ given by $\widehat{x} = \langle x, \cdot \rangle$, where $\langle \cdot, \cdot \rangle$ denotes the inner product with respect to the canonical basis. 

\begin{lemma}\label{prop:bireddual}
For $x,y \in \R^n$, 
\[\exists T \in \GL_n(\Z), Tx = y \Longleftrightarrow \exists S \in \GL_n(\Z), S \widehat{x} = \widehat{y}.\]
\end{lemma}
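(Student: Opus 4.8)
The plan is to reduce the stated biconditional to a direct computation relating the action $\GL_n(\Z) \act \R^n$ to the induced dual action through the transpose. First I would unwind the definitions. Since the action on the dual is $S \cdot f = f \circ S^{-1}$, I would compute, for an arbitrary $v \in \R^n$,
\[(S \cdot \widehat{x})(v) = \widehat{x}(S^{-1}v) = \langle x, S^{-1} v \rangle = \langle (S^{-1})^T x, v \rangle = \langle (S^T)^{-1} x, v \rangle,\]
where the penultimate equality uses the adjoint property $\langle x, Av\rangle = \langle A^T x, v\rangle$ of the inner product with $A = S^{-1}$, and the last uses $(S^{-1})^T = (S^T)^{-1}$. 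Since $v$ was arbitrary, this shows $S \cdot \widehat{x} = \widehat{(S^T)^{-1} x}$; that is, under the isomorphism $\widehat{\cdot}$, the induced dual action corresponds exactly to the action $x \mapsto (S^T)^{-1} x$ on $\R^n$.

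Next, I would invoke that $\widehat{\cdot} : \R^n \to (\R^n)^\ast$ is a linear isomorphism, hence injective, to conclude that $S \cdot \widehat{x} = \widehat{y}$ holds if and only if $(S^T)^{-1} x = y$. The structural point that closes the argument is that the map $S \mapsto (S^T)^{-1}$ is a bijection of $\GL_n(\Z)$ onto itself: the transpose of an integer matrix of determinant $\pm 1$ is again an integer matrix of determinant $\pm 1$, and so is its inverse, so the map does send $\GL_n(\Z)$ into $\GL_n(\Z)$; moreover applying it twice returns $S$, so it is an involution and in particular a bijection.

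Finally, I would combine these observations: as $S$ ranges over $\GL_n(\Z)$ and we set $T = (S^T)^{-1}$, the quantity $T$ ranges over all of $\GL_n(\Z)$, so the existence of some $S \in \GL_n(\Z)$ with $S \cdot \widehat{x} = \widehat{y}$ is equivalent to the existence of some $T \in \GL_n(\Z)$ with $Tx = y$. This is precisely the claimed equivalence. I do not expect a genuine obstacle here, as the statement is essentially a bookkeeping identity; the only points demanding care are the correct placement of transposes and inverses in the adjoint computation, and verifying that the substitution $S \leftrightarrow T$ stays inside $\GL_n(\Z)$ rather than merely $\GL_n(\R)$.
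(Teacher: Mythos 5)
Your proof is correct and follows essentially the same route as the paper's: both compute that the dual action corresponds to $x \mapsto (S^{-1})^{\ast} x$ under the isomorphism $\widehat{\cdot}$ (your transpose is exactly the paper's adjoint for the standard real inner product) and then use that $\GL_n(\Z)$ is closed under transposes and inverses to pass between the two existence statements. Your version is slightly more explicit about the involution $S \mapsto (S^T)^{-1}$ being a bijection of $\GL_n(\Z)$, but the substance is identical.
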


\begin{proof}
The induced action $\GL_n(\R) \act \left (\R^n\right )^\ast$ is given by 
\begin{align*}
T \cdot \widehat{x} &= \widehat{x} \circ T^{-1} \\
&= \langle x, T^{-1} \cdot \rangle \\
&= \langle  \left (T^{-1}\right)^\ast x, \cdot \rangle\\
&= \widehat{\left (T^{-1}\right)^\ast x}
\end{align*}
where $\left (T^{-1}\right)^\ast$ is the adjoint of the inverse of $T$. Noting that $\GL_n(\Z)$ is closed under inverses and adjoints, we reach the proposition, since
\begin{align*}
Tx = y &\Longrightarrow \left(T^{\ast}\right)^{-1} \widehat{x} = \widehat{y}, \\ 
T \widehat{x} = \widehat{y} &\Longrightarrow \left (T^{-1}\right)^\ast x  = y.
\end{align*}
\end{proof}

\subsubsection*{Totally irrational vectors}
We say that $f \in \left (\R^n\right )^\ast$ is \textbf{totally irrational} if it is injective on $\Z^n$. Denote
\begin{align*}
\left (\R^n\right )^\ast_\text{ti} &= \left\{f \in \left (\R^n\right )^\ast: f \text{ is totally irrational} \right\} \\
\R^n_\text{ti} &= \left\{x \in \R^n: \widehat{x} \in \left (\R^n\right )^\ast_\text{ti}\right\}.
\end{align*}
In the sense of Lebesgue measure, most elements are totally irrational:
\begin{lemma}\label{lemma:fullmeas}
$\R^n_\text{ti}$ has full Lebesgue measure in $\R^n$.
\end{lemma}
\begin{proof}
If $x \not \in \R^n_\text{ti}$, then $\langle x, v \rangle = 0$ for some $v \in \Z^n$, i.e $ x \in v^\bot$. For any $v\in \Z^n$, $v^\bot$ is a codimension one subspace, hence has null measure. Thus,
\[\R^n -  \R^n_\text{ti} = \bigcup_{v \in \Z^n} v^\bot\]
is a countable union of null set. This shows the proposition.
\end{proof}

For functions $0 \neq f \in \left(\R^n\right )^\ast_\text{ti}$, define $[f]_{\text{sc}} = \left\{g \in \left(\R^n\right )^\ast_\text{ti} : \exists r > 0, f = gr\right\}$. Denote \[\left(\R^n\right )^\ast_\text{ti} / \R_{>0} = \left\{[f]_{\text{sc}} : f \neq 0 \right\}\] for the space of equivalence classes. Since scalar multiplication commutes with matrices, the action $\GL_n(\Z) \rightarrow \left(\R^n\right )^\ast_\text{ti}$ factors to an action $\GL_n(\Z) \rightarrow \left(R^n\right )^\ast_\text{ti}/ \R_{>0}$. Up to scalars, totally irrational functionals encode the dynamics of Archimedean orders, in the sense of the following proposition:

\begin{proposition}\label{prop:ArOrderStructure}
There is a $\GL_n(\Z)$-equivariant bijection between $\left (\R^n\right )^\ast_\text{ti} / \R_{>0}$ and  $\Ar(\Z^n)$ defined by
\[[f]_{\text{sc}} \mapsto P_f := \left\{x \in \Z^n : f(x) > 0 \right\}.\]
By equivariance, we mean that for all $T \in \GL_n(\Z)$,
\[P_{T \cdot f} = T \cdot P_f.\]
\end{proposition}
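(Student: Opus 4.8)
The plan is to verify the required properties of the map $[f]_{\text{sc}} \mapsto P_f$ one at a time: well-definedness, that $P_f$ is an Archimedean positive cone, $\GL_n(\Z)$-equivariance, injectivity, and surjectivity. Well-definedness is immediate, since multiplying $f$ by $r > 0$ does not change the sign of $f(x)$ for any $x$, so $P_{rf} = P_f$. To see that $P_f$ is a positive cone, note that $f(x+y) = f(x) + f(y)$ gives the semigroup property $P_f + P_f \subseteq P_f$. For the partition $\Z^n = P_f \sqcup \{\id\} \sqcup (-P_f)$, this is exactly where total irrationality enters: since $f$ is injective on $\Z^n$ and $f(\id) = 0$, no nonzero lattice vector lies in $\ker f$, so every $x \neq \id$ satisfies exactly one of $f(x) > 0$ or $f(x) < 0$. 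Translating the Archimedean condition through the correspondence between cones and orders, for $x, y \in P_f$ one needs $n \in \N$ with $n f(y) > f(x)$, which holds for large $n$ since $f(y) > 0$; hence $P_f \in \Ar(\Z^n)$.

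For equivariance, recall that the induced action is $T \cdot f = f \circ T^{-1}$ and that the action on cones sends $P$ to $T(P)$ (unwinding the definition of $\phi \cdot <$ at the identity). Substituting $x = Ty$ then gives $P_{T \cdot f} = \{x \in \Z^n : f(T^{-1}x) > 0\} = T(\{y \in \Z^n : f(y) > 0\}) = T \cdot P_f$.

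The two substantive steps are injectivity and surjectivity. For surjectivity I would invoke Hölder's theorem (Theorem \ref{thm:Holder}): given $P \in \Ar(\Z^n)$, there is an order-preserving embedding $\phi : (\Z^n, <_P) \hookrightarrow (\R, +)$. Being a homomorphism out of $\Z^n$, $\phi$ extends uniquely to a linear functional $f$ on $\R^n$ agreeing with $\phi$ on the lattice; injectivity of $\phi$ makes $f$ totally irrational, and order-preservation yields $P_f = P$. For injectivity, suppose $P_f = P_g$. If $g$ is not a positive multiple of $f$, then either $\ker f \neq \ker g$, in which case $f, g$ are linearly independent and $\{f > 0\} \cap \{g < 0\}$ is a nonempty open cone, or $\ker f = \ker g$ with opposite orientation, so that $P_g = -P_f$, contradicting $P_f = P_g$ directly. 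In the first case I would use that an open cone is scale-invariant and contains rational points (density of $\Q^n$), so clearing denominators produces a nonzero integer vector $v$ with $f(v) > 0$ and $g(v) < 0$, i.e. $v \in P_f \setminus P_g$, again a contradiction.

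The main obstacle is the injectivity argument: unlike the formal manipulations above, it requires the geometric input that every nonempty open cone in $\R^n$ contains a nonzero lattice vector. This is what lets the sign pattern of $f$ on the discrete set $\Z^n$ recover the full half-space $\{f > 0\}$, and hence pin down $[f]_{\text{sc}}$.
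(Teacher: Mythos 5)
Your proposal is correct and follows essentially the same route as the paper: Hölder's theorem for surjectivity via linear extension of the embedding, density of $\Q^n$ in an open cone (plus clearing denominators) for injectivity, and direct computation for equivariance and the Archimedean property. You are somewhat more careful than the paper in two places — explicitly verifying the positive-cone axioms for $P_f$ and justifying via a case split why the open set where $f>0$ and $g<0$ exists — but these are elaborations of the same argument, not a different one.
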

\begin{proof}
Notice that the $P_f$ does not depend on the choice of representative. By Hölder's theorem (Theorem \ref{thm:Holder}), for any $P \in\Ar(\Z^n)$, there is an order-preserving injective group homomorphism $\phi: \Z^n \rightarrow \R$. Notice that $x \in P \Longleftrightarrow \phi(x) > 0$ for each $x \in \Z^n$. We can extend $\phi$ linearly to some element $f \in \left (\R^n\right )^\ast_\text{ti}$ satisfying
\[\forall x \in \Z^n, x \in P \Longleftrightarrow f(x) > 0.\]
This shows the map is surjective. To see $P_f$ is Archimedean, notice that if $n f(x) > f(y)$, then $f(nx) > f(y)$, hence $nx - y \in P_f$. To show the map is injective, notice that if $f,f'$ are not positive multiples of another,there must be an open $U \subset \R^n$ such that $f|_U >0$ and $f'|_U < 0$ there must be some $x \in \Q^n \cap U$. If $k \in \N_{n > 0}$ such that $kx \in \Z^n$, then $kx \in P_f$, but $kx \not\in P_{f'}$. As for equivariance, suppose $P$ is an Archimedean order and $[f]_{sc} \in \left (\R^n\right )^\ast_\text{ti} / \R_{>0}$ satisfies
\[x \in P \Longleftrightarrow f(x) > 0.\]
For all $T \in \GL_n(\Z)$, 
\[x \in T P \Longleftrightarrow T^{-1} x \in P \Longleftrightarrow f(T^{-1}(x)) > 0 \Longleftrightarrow \left(T \cdot f\right)(x) > 0.\]
This shows equivariance.
\end{proof}

\subsection{Borel complexity}
\subsubsection*{Countable Borel equivalence relations and reductions}
A topology is \textbf{Polish} if it is separable and completely metrizable. A $G_\delta$ subset of a Polish space is itself Polish. A \textbf{standard Borel space} is a measurable space $(X, \mathcal{B})$ such that $\mathcal{B}$ is the Borel $\sigma$-algebra of some Polish topology on $X$. If $G$ is a countable group, $\LO(G)$ and $\Ar(G)$ are both standard Borel, being $G_\delta$ subsets of the Polish space $2^G$. If $X$ is standard Borel, so is $X^2$ with the product $\sigma$-algebra. A \textbf{Borel equivalence relation} is an equivalence relation $E \subset X^2$ which is Borel as a subset of $X^2$. We present Borel equivalence relations as $(X,E)$ or simply $E$ when $X$ is implicit. A Borel equivalence relation is \textbf{countable} if every equivalence class is countable. If $G$ is a Polish group and $G \act X$ is a Borel action of $G$ on some standard Borel space, we denote by $E(G\act X)$ the \textbf{orbit equivalence relation}:
\[x \mathbin{E(G \act X)} x' \Longleftrightarrow \exists g\in G, gx = x'.\]
This is not in general Borel, however it is analytic, hence measurable. If $G$ is a countable group, this is Borel. In fact, by a theorem of Feldman and Moore, all countable Borel equivalence relations arise as orbit equivalence relation of some countable group. If $(X,E)$ and $(Y, F)$ are two Borel equivalence relation, a \textbf{Borel reduction} from $E$ to $F$ is a Borel function $f: X \rightarrow Y$ such that 
\[x \mathbin{E} x' \longleftrightarrow f(x) \mathbin{F} f(x').\]
In this case, we denote $E \leq_B F$ and say $E$ is \textbf{Borel reducible} to $F$. Informally, this means $E$ is simpler than $F$. If $E \leq_B F$ and $F \leq_B E$, we write $E \sim_B F$ and say that $E$ and $F$ are \textbf{Borel bireducible}. Here, we never consider measurable or Baire-measurable reductions, hence we will omit ``Borel". The \textbf{$E$-saturation} of a set $A$ is the smallest $E$-invariant set containing $A$. It is denoted $[A]_E$. A \textbf{complete section} of $E$ is a Borel set $A \subset X$ which meets every $E$-class. Equivalently, a Borel set $A$ is a complete section if $[A]_E = X$. For any set $A\subset X$, we denote by $E|_A$ the restriction of $E$ to $A$.
\begin{proposition}[{\cite[Prop 2.6]{dougherty_structure_1994}}]\label{prop:section}
If $(X,E)$ is a countable Borel equivalence relation and $A \subset X$ is a complete section of $E$, we have $E \sim_B E|_A$.
\end{proposition}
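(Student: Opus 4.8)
The plan is to prove $E \sim_B E|_A$ by establishing both reductions.

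First, the reduction $E|_A \leq_B E$ is immediate: the inclusion map $\iota : A \hookrightarrow X$ is Borel, and for $x, x' \in A$ we trivially have $x \mathbin{E|_A} x' \Longleftrightarrow x \mathbin{E} x' \Longleftrightarrow \iota(x) \mathbin{E} \iota(x')$. So the content is entirely in the other direction.

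For the reduction $E \leq_B E|_A$, the idea is to send each point $x \in X$ to a chosen representative of its $E$-class lying inside $A$. Since $A$ is a complete section, every $E$-class meets $A$, so such a representative exists; the task is to choose one \emph{Borel-measurably and $E$-invariantly}. Here I would invoke the Feldman--Moore theorem, which gives a countable group $\Gamma = \{\gamma_n : n \in \N\}$ acting on $X$ in a Borel way with $E = E(\Gamma \act X)$. For $x \in X$, I would let $n(x)$ be the least index $n$ such that $\gamma_n \cdot x \in A$ (this is well-defined since $[A]_E = X$ means the orbit $\Gamma \cdot x$ meets $A$), and define $f(x) = \gamma_{n(x)} \cdot x$. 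The function $n(x)$ is Borel because $\{x : \gamma_n \cdot x \in A\}$ is Borel for each $n$ (preimage of the Borel set $A$ under the Borel map $x \mapsto \gamma_n \cdot x$), so the "least $n$" set is a Boolean combination of these; consequently $f$ is Borel. By construction $f(x) \in A$ for all $x$, and since $f(x) \mathbin{E} x$ always holds, $f$ maps each $E$-class into itself, giving $x \mathbin{E} x' \Longleftrightarrow f(x) \mathbin{E} f(x') \Longleftrightarrow f(x) \mathbin{E|_A} f(x')$, which is exactly the reduction condition (the last equivalence using $f(x), f(x') \in A$).

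The main obstacle, and the only place where countability of $E$ is genuinely used, is guaranteeing a \emph{Borel} selection of a representative in $A$: for a general (non-countable) equivalence relation one cannot in general choose representatives measurably, but the Feldman--Moore presentation reduces the selection to a well-ordered search over a fixed countable group, making the choice function Borel. I would make sure to verify that each slice $\{x : \gamma_n \cdot x \in A\}$ is Borel and that taking the minimal witness keeps $f$ Borel, after which the two reductions combine to give $E \sim_B E|_A$.
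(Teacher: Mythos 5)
Your proof is correct and is the standard argument for this fact; the paper itself does not prove the proposition but cites it from Dougherty--Jackson--Kechris, where essentially this same Feldman--Moore "least witness" selection is used. Both directions are handled properly, including the Borelness of $x \mapsto \gamma_{n(x)}\cdot x$ via the partition into the Borel sets $\{x : n(x) = k\}$.
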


\subsubsection*{Homomorphism and class-bijections}
If $(X,E)$ and $(Y,F)$ are two Borel equivalence relation, a \textbf{homomorphism} from $E$ to $F$ is a Borel function $f:X \rightarrow Y$ such that 
\[x \mathbin{E} x' \Longrightarrow f(x) \mathbin{F} f(x').\]
A homomorphism $f$ from $E$ to $F$ is \textbf{class-bijective} if its restriction to every $E$-class is a bijection.

\subsubsection*{Hyperfiniteness and treeability}

Let $(X, E)$ be a countable Borel equivalence relation. A graph $G \subset X^2$ is a \textbf{graphing} of $E$ if it is Borel as a subset of $X^2$ and its connected component are exactly the $E$-classes. A countable equivalence relation is \textbf{hyperfinite} if it admits a graphing each of whose connected component is a line. It is \textbf{treeable} if it admits a graphing each of whose connected component is a tree.

\begin{proposition}[{\cite[Prop. 1.3, 3.3]{jackson_countable_2002}}]\label{prop:complexclassJKL}
Let $E,F$ be two countable Borel equivalence relations.
\begin{enumerate}
\item[(a)] If $E$ is hyperfinite (resp. treeable) and $F \leq_B E$, then $F$ is also hyperfinite (resp. treeable).
\item[(b)] If $E$ is hyperfinite (resp. treeable) and $F \subset E$ is a subequivalence relation, then $F$ is also hyperfinite (resp. treeable).
\end{enumerate}
\end{proposition}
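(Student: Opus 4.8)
The plan is to reduce both statements to a handful of stable closure properties of the classes of hyperfinite and treeable relations, using the line/tree-graphing definitions above together with Proposition \ref{prop:section}. Throughout I would freely invoke the equivalent description of a hyperfinite relation as an increasing union $E = \bigcup_n E_n$ of finite Borel equivalence relations, which is the combinatorial content of a line graphing (orient each line and let $E_n$ identify points within graph-distance $n$).

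I would begin with part (b), the case of a subequivalence relation $F \subseteq E$ on the same space. For hyperfiniteness this is immediate: writing $E = \bigcup_n E_n$ as above, we have $F = \bigcup_n (F \cap E_n)$, and each $F \cap E_n$ is a subrelation of a finite relation, hence itself finite and Borel, so $F$ is hyperfinite. For treeability, fix a treeing $T$ of $E$; since $F \subseteq E$, each $F$-class $C$ lies inside an $E$-class $D$ on which $T$ restricts to a tree $T_D$. I would define a graph on $C$ by declaring $c, c' \in C$ adjacent exactly when the unique $T_D$-geodesic joining them meets $C$ only at its endpoints, and then verify, in a Borel manner using that geodesics in a Borel tree can be selected measurably, that this ``contracted'' graph has vertex set $C$, is connected, and is acyclic, i.e. is a treeing of $F$.

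For part (a), let $f\colon Y \to X$ be a Borel reduction of $F$ to $E$. The first step is to make $f$ injective without loss of generality. The fibre relation $R = \{(y,y') : f(y) = f(y')\}$ is Borel, is contained in $F$ (since $f$ is a reduction), and is smooth (witnessed by $f$ itself), hence is a smooth countable Borel equivalence relation and admits a Borel transversal $Y_0$. On $Y_0$ the map $f$ is injective, and $Y_0$ meets every $F$-class (each $F$-class is a union of $R$-classes, all of which $Y_0$ meets), so $Y_0$ is a complete section and $F \sim_B F|_{Y_0}$ by Proposition \ref{prop:section}. Now $f|_{Y_0}$ is an injective Borel reduction, hence a Borel isomorphism onto its Borel image $A = f(Y_0)$ carrying $F|_{Y_0}$ to $E|_A$. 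It then remains to establish two closure properties. First, that $E|_A$ is hyperfinite (resp. treeable) whenever $E$ is: for hyperfiniteness this follows at once from the increasing-union description, and for treeability it is exactly the geodesic-contraction construction of part (b) applied to the subsets $A \cap D$. Second, that hyperfiniteness (resp. treeability) transfers back up from the complete section $Y_0$ to $F$; here I would pick a Borel retraction $r\colon Y \to Y_0$ with $r(y) \mathbin{F} y$ (a Luzin--Novikov selector into $Y_0 \cap [y]_F$) and attach each $y \in Y \setminus Y_0$ as a pendant edge to $r(y)$, which visibly sends a treeing of $F|_{Y_0}$ to a treeing of $F$, the hyperfinite case being handled analogously through the increasing-union description.

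The main obstacle I anticipate is the treeable case of (b): checking that the geodesic-contraction graph is simultaneously connected and acyclic rests on the genuinely tree-theoretic fact that a tree admits no closed reduced walk of positive length (so that a hypothetical cycle in the contracted graph would force a backtrack, contradicting that each edge comes from a geodesic meeting $C$ only at its endpoints), and on verifying that the selection of geodesics and the resulting adjacency relation are Borel. Everything else — the hyperfinite bookkeeping, the injectivization, and the pendant-attachment used to induce up from a complete section — is routine once the increasing-union characterization of hyperfiniteness and Proposition \ref{prop:section} are in hand.
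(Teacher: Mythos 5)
Your handling of the hyperfinite statements and of the general machinery in part (a) --- injectivizing the reduction via smoothness of its kernel, passing to the complete section $Y_0$ with Proposition \ref{prop:section}, and inducing a graphing back up from a complete section by attaching pendant edges along a Luzin--Novikov retraction --- is correct and standard. (One quibble: the increasing-union characterization of hyperfiniteness is a correct, quotable equivalence, but your parenthetical derivation of it from a line graphing is too glib, since ``within graph-distance $n$'' is not transitive and lines cannot in general be Borel-oriented component-by-component; this equivalence is itself a theorem of Slaman--Steel and Dougherty--Jackson--Kechris, not an observation.) Note also that the paper gives no proof of this proposition --- it is quoted from \cite{jackson_countable_2002} --- so the comparison below is with the standard arguments rather than with anything in the text.

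The genuine gap is in the treeable case, at exactly the step you flagged as the main obstacle: the geodesic-contraction graph is connected, but it is \emph{not} acyclic, and your backtracking argument does not prove that it is. Concretely, let a single $F$-class $D=\{z,a,b,c\}$ be treed by the star with centre $z$ and leaves $a,b,c$, and let $C=\{a,b,c\}$ be an $E$-class (with $\{z\}$ a separate class). Each of the geodesics $a$--$z$--$b$, $b$--$z$--$c$, $c$--$z$--$a$ meets $C$ only at its endpoints, so the contracted graph on $C$ is a triangle. The concatenated closed walk does backtrack, but only at the junction vertices $a$, $b$, $c$, which are \emph{endpoints} of the constituent geodesics, so no hypothesis is violated: your argument conflates a backtrack interior to one geodesic (impossible) with a backtrack at a junction (possible and harmless). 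The same defect infects the step ``$E|_A$ is treeable'' in part (a), since restricting to a Borel subset poses the identical vertex-subset-of-a-tree problem. Repairing this requires a genuinely new ingredient: each vertex of the convex hull of $C$ lying outside $C$ contracts to a complete graph on its incident directions, and one must break the resulting cycles, e.g.\ by using Luzin--Novikov uniformization to select a single distinguished edge at each such vertex and only admitting geodesics that pass through that vertex via its distinguished edge --- in effect a Schreier-transversal argument at the level of the treed class, in the spirit of the Nielsen--Schreier theorem. Everything else in your proposal survives, but as written the treeable halves of both (a) and (b) are not proved.
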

\begin{proposition}[{\cite[Cor. 4.12]{chen_structurable_2018}}]\label{prop:complexclassCK}
If $F$ is hyperfinite (resp. treeable) and $f$ is a class-bijective homomorphism from $E$ to $F$, then $E$ is also hyperfinite (resp. treeable).
\end{proposition}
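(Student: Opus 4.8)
The plan is to pull back a witnessing graphing of $F$ along $f$, and then to observe that class-bijectivity turns $f$ into a graph isomorphism on each class, so that every structural feature of the $F$-components transfers verbatim to the $E$-components. Concretely, I would first invoke the definitions given above: since $F$ is hyperfinite (resp. treeable), fix a Borel graphing $\mathcal{G} \subseteq Y^2$ of $F$ all of whose connected components are lines (resp. trees). I then define
\[
\mathcal{H} := \left\{ (x, x') \in X^2 : x \mathbin{E} x',\ x \neq x',\ \text{and } (f(x), f(x')) \in \mathcal{G} \right\}.
\]
Because $E$, $f$, and $\mathcal{G}$ are all Borel, $\mathcal{H}$ is a Borel subset of $X^2$; it is symmetric and irreflexive since $\mathcal{G}$ is, so $\mathcal{H}$ is a Borel graph on $X$.

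Next I would verify that $\mathcal{H}$ is a graphing of $E$. By construction every edge of $\mathcal{H}$ joins $E$-equivalent points, so each connected component of $\mathcal{H}$ is contained in a single $E$-class. For the converse, fix an $E$-class $C$ and set $D := [f(x)]_F$ for some (any) $x \in C$. Since $f$ is a class-bijective homomorphism, $f|_C \colon C \to D$ is a bijection, and by the very definition of $\mathcal{H}$ it carries edges to edges and non-edges to non-edges; hence $f|_C$ is an isomorphism of graphs from $(C, \mathcal{H}|_C)$ onto $(D, \mathcal{G}|_D)$. As $D$ is an entire $F$-class, $\mathcal{G}|_D$ is a connected component of $\mathcal{G}$, so it is connected, and therefore $(C, \mathcal{H}|_C)$ is connected as well. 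Thus the connected components of $\mathcal{H}$ are exactly the $E$-classes.

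Finally, because $f|_C$ is a graph isomorphism from $\mathcal{H}|_C$ onto $\mathcal{G}|_D$, and $\mathcal{G}|_D$ is a line (resp. a tree), the component $\mathcal{H}|_C$ is also a line (resp. a tree). Hence $\mathcal{H}$ is a graphing of $E$ each of whose connected components is a line (resp. tree), which is precisely the definition of $E$ being hyperfinite (resp. treeable).

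I do not expect a serious obstacle: once the pullback graph is written down, the proof collapses to the single remark that class-bijectivity promotes $f$ to a per-class graph isomorphism, after which the graph-theoretic shape of each component is preserved exactly. The only points deserving care are confirming the Borelness of $\mathcal{H}$ (immediate from Borelness of $E$, $f$, $\mathcal{G}$) and using that \emph{class-bijective} means $f|_C$ maps onto the \emph{full} $F$-class $[f(x)]_F$ rather than merely onto its image --- this is exactly what makes $\mathcal{G}|_D$ a genuine connected component of $\mathcal{G}$ (a whole line or tree) instead of a possibly disconnected subgraph.
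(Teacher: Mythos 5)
Your proof is correct. The paper itself offers no proof of this proposition---it is quoted from \cite{chen_structurable_2018} (Cor.~4.12), where the general statement is that class-bijective homomorphisms pull back structurability; your argument is exactly the specialization of that pullback idea to graphings, carried out against the paper's own definitions of hyperfinite and treeable. The two points you flag as needing care (Borelness of $\mathcal{H}$, and surjectivity of $f|_C$ onto the full $F$-class so that $\mathcal{G}|_D$ is a genuine component) are indeed the only ones, and the inclusion of the clause $x \mathbin{E} x'$ in the definition of $\mathcal{H}$ correctly prevents distinct $E$-classes with the same image class from being glued together.
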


\subsection{Properties of actions and equivalence relations}

\subsubsection*{Zimmer amenability}

Throughout this section, we deal with locally compact second countable group $G$, which we abbreviate to \textbf{lcsc}. $G \act X$ will always denote a measure preserving action of $G$ on a standard $\sigma$-finite measure space. Consider a separable Banach space $B$ and a continuous representation $\pi: G \rightarrow \LI(B)$ of $G$ into the linear isometries of $B$. This induces a dual representation $\pi^\ast: G \rightarrow \LI(B^\ast)$. One of several equivalent definitions for lcsc group states that an lcsc group $G$ is \textbf{amenable} if for each separable Banach space $B$, for each representations $\pi: G \rightarrow \LI(B)$, for each weakly compact, convex  $\pi^\ast$-invariant $K \subset B^\ast$, there is some  $\pi^\ast$-invariant $v \in K$. In \cite{zimmer_amenable_1978}, Zimmer introduces a notion of amenability for ergodic, $\sigma$-finite measure preserving actions $G \act X$ of lcsc groups. Given a separable Banach space $B$, we define a \textbf{Borel a.e cocycle} as a Borel map $\alpha: G \times X \rightarrow \LI(B)$ (The group of linear isometries is Polish, hence is standard Borel.) satisfying for all $g,h \in G$, for almost every $ x\in X$, $\alpha(h, gx)\alpha(g,x) = \alpha(hg,x)$. Given a Borel a.e cocycle $\alpha$, we define an \textbf{$\alpha$-invariant field} as a Borel map $K:X \rightarrow K(B^\ast)$ to the set of weakly compact subset of $B^\ast$ (The set of weakly compact subsets of $B^\ast$ admits a standard Borel structure) such that $K(x)$ is convex for all $x$ and for all $g\in G$, for almost every $x\in X$, $\alpha(g,x)K(x) = K(gx)$. Given an $\alpha$-invariant field $K$, an \textbf{invariant section} is a Borel map $v: X \rightarrow  B^\ast$ such that $v(x) \in K(x)$ and for all $g\in G$, for almost every $x\in X$, $\alpha(g,x)v(x) = v(gx)$.
\begin{definition}[\cite{zimmer_amenable_1978}]
Let $G$ be a lcsc group and $G\act X$ be an ergodic, $\sigma$-finite measure preserving action. This action is said to be \textbf{amenable} if for every separable Banach space $B$, for every Borel a.e cocycle $\alpha: G \times X \rightarrow \LI(B)$, for every $\alpha$-invariant field $K$, there exists an invariant section $v$.
\end{definition}
There is also a notion for ergodic $\sigma$-finite measure preserving countable Borel equivalence relations, introduced in \cite{zimmer_hyperfinite_1977}. We omit it since the definition is similar, reformulating the definition of Borel a.e cocycle, $\alpha$-invariant field and invariant section for equivalence relations. To state important properties, we will need the following definition: An action $G \act X$ is \textbf{essentially free} if it is free on a conull $G$-invariant set. It is \textbf{essentially transitive} if there is a conull orbit. A \textbf{lattice} in a locally compact group $G$ is a discrete subgroup $H$ such that the quotient $G /H$ admits a $G$-invariant Borel probability measure. The relevant properties are the following:
\begin{proposition}[{\cite[Prop 3.1, 3.4]{zimmer_induced_1978}}]
Let $G$ be a locally compact separable group, $G \act X$ is a $\sigma$-finite measure preserving action.
\begin{enumerate}
\item[(a)] If $G \act X$ is essentially transitive, then $G \act X$ is amenable if and only if the stabilizer of a point is almost surely amenable.
\item[(b)] Suppose $G \act X$ is an ergodic amenable action and $H \subset G$ is a lattice. If $H \act X$ also acts ergodically, then $H \act X$ is amenable.
\end{enumerate}
\end{proposition}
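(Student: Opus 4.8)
The statement is Zimmer's, so the plan is to reproduce the two standard arguments: part (a) through the dictionary between Borel a.e.\ cocycles of the transitive action and (dual) representations of the stabilizer, and part (b) through the theory of induced actions together with the finite invariant measure on $G/H$ that the lattice hypothesis supplies.

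\emph{Part (a).} By essential transitivity I may discard a null set and identify $X$ with the homogeneous space $G/H$, where $H = \mathrm{Stab}(x_0)$ is the closed stabilizer, carrying its quasi-invariant measure class. I fix a Borel section $\sigma : G/H \to G$ with $\sigma(eH) = e$ (measurable selection) and the associated canonical cocycle $\beta(g, x) = \sigma(gx)^{-1} g\, \sigma(x) \in H$. For the implication ``$H$ amenable $\Rightarrow$ action amenable'', let $\alpha$ be a Borel a.e.\ cocycle and $K$ an $\alpha$-invariant field. Since $H$ fixes $eH$, the cocycle identity shows $\pi := \alpha(\,\cdot\,, eH)|_H$ is (a.e.) a representation of $H$, while $K(eH) \subseteq B^\ast$ is weakly compact, convex, and invariant under the dual representation $\pi^\ast$; amenability of $H$ then yields a fixed point $v_0 \in K(eH)$. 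I would set $v(x) := \alpha(\sigma(x), eH)\, v_0$ and verify, using $g\,\sigma(x) = \sigma(gx)\,\beta(g,x)$, the cocycle identity, and $\alpha(h, eH) v_0 = v_0$ for $h \in H$, that $v$ is a genuine invariant section with $v(x) \in K(x)$. For the converse I would run this backwards: given a representation $\pi$ of $H$ with a $\pi^\ast$-invariant weakly compact convex $K_0$, form the cocycle $\alpha := \pi \circ \beta$ together with the constant field $K \equiv K_0$ (which is $\alpha$-invariant because $\beta$ takes values in $H$), apply amenability of the action to obtain an invariant section $v$, and read off from its value over $eH$ a $\pi^\ast$-fixed vector in $K_0$.

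\emph{Part (b).} Here the plan is to realize the restricted action as an induced action. Because the $H$-action on $X$ is the restriction of a genuine $G$-action, the induced $G$-space $\mathrm{Ind}_H^G(X) = (G \times X)/H$ is $G$-equivariantly isomorphic to $X \times G/H$ with the diagonal action, via $[g, x] \mapsto (gx, gH)$. The lattice hypothesis is precisely what makes this a usable object: $G/H$ then carries a finite $G$-invariant measure, so $X \times G/H$ is a $\sigma$-finite measure-preserving $G$-space, and the hypothesis that $H \act X$ is ergodic guarantees that this induced action is itself ergodic. Now since $G \act X$ is amenable and the product of an amenable $G$-space with an arbitrary $G$-space is again amenable, $G \act X \times G/H = \mathrm{Ind}_H^G(X)$ is amenable. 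Finally, amenability of an induced action $G \act \mathrm{Ind}_H^G(X)$ is equivalent to amenability of the inducing action $H \act X$, and this last step delivers the conclusion.

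\emph{Main obstacle.} The genuine content sits in two places. In part (a) the delicate point is the converse direction: the invariant section $v$ is defined only almost everywhere and satisfies the invariance identity only off null sets, so extracting an honest $\pi^\ast$-fixed vector at the single point $eH$ requires care (a disintegration or a weak-$\ast$ averaging argument) rather than naive evaluation. In part (b) the two facts invoked as black boxes -- that amenability is preserved under products with an amenable factor, and that it transfers between an induced action and its inducing action -- are themselves the analytic heart of Zimmer's theory; establishing them, and checking that all the measures stay $\sigma$-finite and the actions ergodic throughout, is where the real work lies.
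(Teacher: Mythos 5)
The paper does not prove this proposition at all: it is imported verbatim from Zimmer's \emph{Induced and amenable ergodic actions of Lie groups} (the bracketed citation is the proof), so there is no in-paper argument to compare against. Your sketch is a faithful reconstruction of how Zimmer actually argues: part (a) via the point realization $X \cong G/H$, the canonical cocycle $\beta(g,x)=\sigma(gx)^{-1}g\sigma(x)$, and the back-and-forth between $\alpha$-invariant fields over $G/H$ and $\pi^\ast$-invariant compact convex sets for the stabilizer; part (b) via the identification $\mathrm{Ind}_H^G(X|_H)\cong X\times G/H$, the fact that the product of an amenable $G$-space with any $G$-space is amenable, and the equivalence of amenability for an induced action and its inducing action. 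The obstacles you flag are the right ones, and one of them applies more broadly than you say: the evaluation of an a.e.\ cocycle at the single (typically null) point $eH$ is needed already in the forward direction of (a) to even define $\pi=\alpha(\cdot,eH)|_H$, not only in the converse; Zimmer handles this by first replacing $\alpha$ by a strict cocycle. With that caveat, the outline is correct and matches the cited source's approach.
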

\begin{proposition}[{\cite[Prop. 3.2]{zimmer_hyperfinite_1977}}]\label{prop:freeactamen}
Let $G\act X$ be an essentially free, $\sigma$-finite measure preserving action of a countable discrete group $G$. The action $G\act X$ is amenable if and only if the orbit equivalence relation $E(G \act X)$ is amenable.
\end{proposition}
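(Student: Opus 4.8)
The plan is to exploit that, for an essentially free action, the orbit equivalence relation and the action carry exactly the same cocycle data, so that Zimmer amenability of one is literally the existence problem defining amenability of the other. Both notions are phrased in terms of a separable Banach space $B$, a Borel a.e.\ cocycle valued in $\LI(B)$, an invariant field of weakly compact convex subsets of $B^\ast$, and the existence of an invariant section; the content of the proposition is that these two existence problems coincide once freeness is used to identify group elements with pairs in the equivalence relation.

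First I would replace $X$ by a conull $G$-invariant Borel set $X_0$ on which the action is genuinely free; this exists by essential freeness, and since every condition in sight is quantified ``for almost every $x$'', discarding a null set changes neither notion of amenability (and $E(G\act X)|_{X_0}$ is conull in $E(G\act X)$). On $X_0$, freeness yields for each pair $(y,x) \in E(G\act X)|_{X_0}$ a \emph{unique} group element $g$ with $gx=y$. Writing $c(y,x)$ for this element, I would check that $c \colon E(G\act X)|_{X_0} \to G$ is Borel — this is exactly where essential freeness is used — and that it satisfies $c(gx,x)=g$ together with the groupoid cocycle identity $c(z,y)c(y,x)=c(z,x)$.

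With $c$ in hand I would set up the dictionary between action cocycles $\alpha\colon G\times X\to \LI(B)$ and equivalence-relation cocycles $\beta\colon E(G\act X)\to \LI(B)$ by
\[
\alpha(g,x)=\beta(gx,x),\qquad \beta(y,x)=\alpha\bigl(c(y,x),x\bigr).
\]
The action identity $\alpha(h,gx)\alpha(g,x)=\alpha(hg,x)$ is, under this dictionary, exactly $\beta(hgx,gx)\beta(gx,x)=\beta(hgx,x)$, so the two cocycle identities transfer to one another; the passage producing $\beta$ from $\alpha$ relies on the well-definedness (uniqueness of $g$) guaranteed by freeness, while the reverse passage is unconditional. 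Crucially, an invariant field $K\colon X\to K(B^\ast)$ and an invariant section $v\colon X\to B^\ast$ are literally the same objects on $X$ in both settings, and the invariance conditions match: $\alpha(g,x)K(x)=K(gx)$ reads $\beta(gx,x)K(x)=K(gx)$, and similarly for $v$. Hence a pair $(\beta,K)$ for $E(G\act X)$ admits an invariant section if and only if the corresponding pair $(\alpha,K)$ for the action does, which gives both implications of the equivalence at once.

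The hard part will be purely measure-theoretic bookkeeping rather than anything conceptual: verifying that $c$ is Borel (via a Lusin--Novikov style selection on the countable-to-one Borel set $E(G\act X)|_{X_0}$), that the conull free set may be taken Borel and $G$-invariant, and that the cocycle and invariance identities — which hold only for almost every $x$ — can be transported back and forth without the accumulated null sets breaking the correspondence. One should also note that, if needed, ergodicity can be arranged or the argument run on each ergodic component, since the dictionary is pointwise in $x$. Once these technical points are settled, the equivalence of the two existence statements is immediate from the dictionary above.
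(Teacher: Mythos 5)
The paper does not actually prove this proposition: it is quoted directly from Zimmer (\cite[Prop.\ 3.2]{zimmer_hyperfinite_1977}), with only a remark reducing the $\sigma$-finite invariant case to the quasi-invariant probability case, so there is no in-paper argument to compare against. Your dictionary between action cocycles and relation cocycles via the map $c(y,x)$ is exactly the standard mechanism behind Zimmer's result, and it is sound; both implications do come out of the single correspondence $\alpha(g,x)=\beta(gx,x)$, $\beta(y,x)=\alpha(c(y,x),x)$, with fields and sections unchanged. The two technical points you defer are genuinely routine here: $c$ is Borel without any Lusin--Novikov selection, since on the free conull set $c^{-1}(g)$ is just the graph of the Borel bijection $x\mapsto gx$; and the a.e.\ quantifiers match because the measure $\nu$ on $E(G\act X)$ is a countable sum over $g\in G$ of copies of $\mu$ carried on these graphs, so a $\nu$-null subset of $E$ is precisely one whose $g$-section is $\mu$-null for every $g$, which is the ``for all $g$, for a.e.\ $x$'' form of the action identities. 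The only caveat worth recording is the one the paper itself flags: Zimmer's definitions are set up for quasi-invariant probability measures, so one should first replace the invariant $\sigma$-finite measure by an equivalent quasi-invariant probability measure (this changes neither the orbit equivalence relation nor the class of null sets, hence neither notion of amenability).
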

\begin{remark}
While {\cite[Prop. 3.2]{zimmer_hyperfinite_1977}} is actually for quasi-invariant probability measures, it is straightforward to turn an invariant $\sigma$-finite measure into an equivalent quasi-invariant probability measure.
\end{remark}

The following fact is noted in \cite{zimmer_orbit_1981}.
\begin{proposition}\label{prop:notamenact}
If $n\geq 3$, the action $\SL_n(\Z) \act \R^n$ is not amenable.
\end{proposition}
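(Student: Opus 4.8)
The plan is to reduce the non-amenability of the lattice action $\SL_n(\Z) \act \R^n$ to that of the ambient action $\SL_n(\R) \act \R^n$, and to settle the latter using the stabilizer criterion (part (a) of the cited proposition of Zimmer \cite{zimmer_induced_1978}). First I would record that Lebesgue measure $\lambda$ is $\SL_n(\R)$-invariant (every matrix has determinant $1$) and $\sigma$-finite, and that $\{0\}$ is $\lambda$-null. On the conull set $\R^n \setminus \{0\}$ the group $\SL_n(\R)$ acts transitively for $n \geq 2$: given nonzero $u,v$ one first produces $M \in \GL_n(\R)$ with $Mu = v$ and then rescales a basis vector transverse to $v$ to force $\det M = 1$. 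Hence $\SL_n(\R) \act \R^n$ is essentially transitive, with $\R^n \setminus \{0\} \cong \SL_n(\R)/H_0$ where $H_0 := \operatorname{Stab}(e_1)$.

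Next I would identify $H_0$ explicitly and show it is non-amenable. A matrix fixes $e_1$ exactly when its first column is $e_1$, so
\[ H_0 = \left\{ \begin{pmatrix} 1 & b^{T} \\ 0 & A \end{pmatrix} : A \in \SL_{n-1}(\R),\ b \in \R^{n-1} \right\} \cong \R^{n-1} \rtimes \SL_{n-1}(\R), \]
and the projection onto the lower-right block is a continuous surjection $H_0 \twoheadrightarrow \SL_{n-1}(\R)$. For $n \geq 3$ we have $n-1 \geq 2$, and $\SL_{n-1}(\R)$ is non-amenable, since it contains a copy of $\SL_2(\R)$ and hence a non-abelian free subgroup. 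As amenability passes to continuous quotients, $H_0$ is non-amenable, and part (a) of the cited proposition then gives that the essentially transitive action $\SL_n(\R) \act \R^n$ is \emph{not} amenable.

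It remains to descend to the lattice. I would first verify ergodicity of $\SL_n(\Z) \act \R^n$: since $H_0$ is non-compact (it contains the unbounded subgroup $\R^{n-1}$) and $\SL_n(\Z)$ is a lattice in the semisimple group $\SL_n(\R)$, Moore's ergodicity theorem shows $\SL_n(\Z) \act \SL_n(\R)/H_0 \cong \R^n \setminus \{0\}$ is ergodic, so the action on $\R^n$ is ergodic; the ambient action is ergodic too, being essentially transitive. Now suppose toward a contradiction that $\SL_n(\Z) \act \R^n$ were amenable. Because $\SL_n(\Z)$ is a lattice in $\SL_n(\R)$, so that $\SL_n(\R)/\SL_n(\Z)$ carries a finite invariant measure, amenability of the restricted lattice action is equivalent to amenability of the ambient action $\SL_n(\R) \act \R^n$; this is the companion to part (b) in Zimmer's induction theory, whose ergodicity hypotheses we have just checked for both actions. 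Hence $\SL_n(\R) \act \R^n$ would be amenable, contradicting the previous paragraph, and we conclude that $\SL_n(\Z) \act \R^n$ is not amenable.

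I expect the main obstacle to be precisely this descent step. Part (b) as quoted only yields that amenability of the ambient action implies amenability of the lattice action, whereas here I need the reverse implication, and the crux is to justify the full equivalence of amenability for a lattice and its ambient group. This rests on the finiteness of the invariant measure on $\SL_n(\R)/\SL_n(\Z)$; I would either cite the relevant portion of \cite{zimmer_induced_1978} directly or argue through the induced action on $\SL_n(\R) \times_{\SL_n(\Z)} \R^n$, taking care that the ergodicity conditions required to invoke the induction theorem are genuinely in place.
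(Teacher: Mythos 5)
Your proof is correct and follows essentially the same route as the paper: reduce to the transitive action $\SL_n(\R) \act \R^n$, show the point stabilizer is non-amenable because it contains $\SL_2(\R)$ (hence a non-abelian free subgroup), and descend to the lattice $\SL_n(\Z)$ via Moore's ergodicity theorem. You are in fact more explicit than the paper about the one genuinely delicate point, namely that the quoted part (b) gives only the implication from ambient to lattice while the argument needs the converse (which does hold, by Zimmer's induction theory, under the ergodicity hypotheses you verify).
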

\begin{proof}
Because the action $\SL_n(\R) \act \R^n$ is transitive, we may turn to the stabilizer of any point. The stabilizer of $(1, 0, ..., 0, 0)$ contains a copy of $\SL_2(\R)$ as a closed subgroup, consisting of transformations acting only on the last two coordinates. Since $\SL_2(\R)$ is not amenable, containing a discrete copy of a free non-abelian group, the stabilizer is itself not amenable. Hence the action $\SL_n(\R) \act \R^n$ is not amenable. Since $\SL_n(\Z)$ is a lattice in $\SL_n(\R)$, it suffices to see that $\SL_n(\Z) \act \R^n$ is ergodic. This is directly implied by Moore's ergodicity theorem, see  \cite[Example 2.2.9]{zimmer_ergodic_1984}.
\end{proof}

\begin{theorem}[\cite{connes_amenable_1981}]
A countable equivalence relation is amenable iff it is hyperfinite almost everywhere.
\end{theorem}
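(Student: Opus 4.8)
The plan is to prove the two implications separately, working with the measured equivalence relation $E$ on the $\sigma$-finite space $(X,\mu)$, and to exploit the fact that Zimmer amenability, being a fixed-point property, is stable under the limiting operations we will need. I would state at the outset the combinatorial reformulation that makes amenability usable: amenability of $E$ is equivalent to the existence of a sequence of Borel functions $\lambda_k : E \to \R_{\geq 0}$ that are probability densities along each orbit (so $\sum_{y \mathbin{E} x} \lambda_k(x,y) = 1$ for a.e.\ $x$) and that are asymptotically invariant, meaning $\sum_{y} |\lambda_k(x,y) - \lambda_k(x',y)| \to 0$ for $(x,x') \in E$ in an $L^1(\mu)$ sense. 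This Reiter/F\o{}lner-type condition packages amenability as an approximate invariance of finite averages along orbits.

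First I would dispatch the easy direction, that hyperfiniteness a.e.\ implies amenability. A finite measure-preserving equivalence relation is amenable because one can uniformly average any $\alpha$-invariant field over each finite class: the resulting barycenter automatically lands in the convex weakly compact set $K(x)$ and is invariant. If $E = \bigcup_n F_n$ is an increasing union of finite subequivalence relations, I would take invariant sections $v_n$ for each $F_n$ and extract a limit in the appropriate weak topology, the increasing-union structure ensuring the limit is genuinely $E$-invariant. Thus hyperfiniteness a.e.\ yields amenability.

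The substance is the converse. With the F\o{}lner functions $\lambda_k$ in hand, the plan is to manufacture an increasing sequence of finite Borel subequivalence relations $F_1 \subset F_2 \subset \cdots$ whose union is $E$ modulo a null set. For each $\varepsilon$ and each large $k$, I would use $\lambda_k$ to locate, via a maximal almost-invariant set (a measurable Rokhlin-type exhaustion argument), finite subsets of orbits that are $\varepsilon$-invariant and that tile almost all of $X$ with controlled overlap; declaring two points equivalent when they lie in a common tile produces a finite subequivalence relation. Iterating with $\varepsilon \to 0$ while nesting the tilings gives the increasing chain, and the asymptotic invariance of the $\lambda_k$ forces the union to capture $E$ up to measure zero. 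Finally, Feldman--Moore together with the characterization of hyperfiniteness as exhaustion by finite subequivalence relations identifies this chain with a.e.\ hyperfiniteness.

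The hard part will be this middle construction: simultaneously achieving near-invariance, near-disjointness, and near-covering of the tiles while keeping each $F_n$ finite and the chain genuinely increasing. This is exactly where the Ornstein--Weiss quasi-tiling machinery (or the original maximal-function argument of Connes--Feldman--Weiss) becomes indispensable, since a naive greedy choice of F\o{}lner tiles need not nest, and controlling the accumulated boundary error across infinitely many stages is the crux of the theorem.
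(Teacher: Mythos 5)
The paper does not prove this statement at all: it is quoted verbatim from Connes--Feldman--Weiss \cite{connes_amenable_1981} (with the translation between the amenability notions handled by the cited work of Adams--Elliott--Giordano), so there is no in-paper argument to compare yours against. What you have written is a correct high-level description of the architecture of the actual Connes--Feldman--Weiss proof, but it is an outline rather than a proof, and the two places where you defer are precisely where all of the mathematical content lives.

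First, your opening reformulation --- that Zimmer amenability of $E$ is equivalent to the Reiter/F\o{}lner condition on orbit-wise probability densities $\lambda_k$ --- is itself a nontrivial theorem, not a repackaging. Zimmer's definition is a fixed-point property for cocycles into isometries of separable Banach spaces; extracting from it an invariant mean on $L^\infty$ of the orbits, and then approximating that mean by asymptotically invariant $L^1$ densities (a Day/Namioka-type argument carried out measurably and fibrewise over $X$), is a substantial step that you assume. Second, and more seriously, the converse direction is reduced to ``manufacture nested finite subequivalence relations via a Rokhlin-type exhaustion / Ornstein--Weiss quasi-tiling,'' which you yourself flag as the hard part. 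Naming the maximal-function machinery is not the same as running it: the control of accumulated boundary error across infinitely many nesting stages, and the verification that the union of the $F_n$ exhausts $E$ off a null set, is the theorem. The easy direction (hyperfinite a.e.\ implies amenable) is fine in spirit, though the weak-$\ast$ limit of the sections $v_n$ needs a measurable selection of cluster points (e.g.\ via a Banach limit applied fibrewise), which you should say. In short: right roadmap, but the proposal establishes neither implication as written; for the purposes of this paper the statement should simply remain a citation.
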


\begin{corollary}\label{prop:nonamen3}
If $n \geq 3$, the equivalence relation $E(\SL_n(\Z) \act \R^n_{\text{ti}})$ is not hyperfinite.
\end{corollary}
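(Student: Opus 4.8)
The plan is to deduce non-hyperfiniteness from the non-amenability of the ambient action recorded in Proposition \ref{prop:notamenact}, routing through the orbit equivalence relation. Concretely, I would establish that the action $\SL_n(\Z) \act \R^n_\text{ti}$ is non-amenable, that this forces $E(\SL_n(\Z) \act \R^n_\text{ti})$ to be non-amenable via Proposition \ref{prop:freeactamen}, and that a non-amenable countable equivalence relation cannot be hyperfinite by the theorem of \cite{connes_amenable_1981}.

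First I would verify the hypotheses needed to invoke Proposition \ref{prop:freeactamen}, namely that $\SL_n(\Z) \act \R^n_\text{ti}$ is an essentially free, $\sigma$-finite, measure-preserving action of a countable discrete group. Invariance of Lebesgue measure under $\SL_n(\R)$ (all elements having determinant one) supplies the measure-preserving and $\sigma$-finite clauses. For essential freeness, note that each $T \in \SL_n(\Z)$ with $T \neq \id$ fixes only the proper linear subspace $\ker(T - \id)$, which is Lebesgue-null, so the countable union over all such $T$ is null and almost every point has trivial stabilizer. Since $\R^n_\text{ti}$ is conull by Lemma \ref{lemma:fullmeas} and $\SL_n(\Z)$-invariant, the restricted action remains essentially free, and it remains ergodic by conullity together with the ergodicity of $\SL_n(\Z) \act \R^n$ already used in Proposition \ref{prop:notamenact}.

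Next I would transfer non-amenability from $\R^n$ to the conull invariant subset $\R^n_\text{ti}$. Because Zimmer amenability is defined entirely through almost-everywhere conditions on cocycles, invariant fields and invariant sections, it is insensitive to deleting a null invariant set; hence Proposition \ref{prop:notamenact} yields that $\SL_n(\Z) \act \R^n_\text{ti}$ is not amenable. Proposition \ref{prop:freeactamen} then gives that $E(\SL_n(\Z) \act \R^n_\text{ti})$ is not amenable, and finally \cite{connes_amenable_1981} rules out hyperfiniteness: a hyperfinite relation would be hyperfinite almost everywhere and therefore amenable, contradicting the previous line. The one step demanding care, which I regard as the main obstacle, is this transfer across the conull set: one must make precise both that restricting to $\R^n_\text{ti}$ preserves ergodicity (so that the definition of amenability even applies) and that it does not alter whether invariant sections exist, so that the non-amenability established on $\R^n$ genuinely persists on $\R^n_\text{ti}$.
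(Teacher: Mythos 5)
Your proposal is correct and follows essentially the same route as the paper: both arguments combine Lemma \ref{lemma:fullmeas}, Proposition \ref{prop:freeactamen}, Proposition \ref{prop:notamenact}, and the Connes--Feldman--Weiss theorem, differing only in that the paper argues by contradiction and does the null-set bookkeeping by promoting hyperfiniteness on $\R^n_\text{ti}$ to almost-everywhere hyperfiniteness of $E(\SL_n(\Z) \act \R^n)$, whereas you restrict the action itself to the conull invariant set and transfer non-amenability there. Your extra verifications (essential freeness via the nullity of $\ker(T-\id)$, ergodicity on the conull set) are correct and merely make explicit what the paper leaves implicit.
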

\begin{proof}
Suppose to the contrary that the equivalence relation $E(\SL_n(\Z) \act \R^n_{\text{ti}})$ is hyperfinite. Then, by Lemma \ref{lemma:fullmeas}, $E(\SL_n(\Z) \act \R^n)$ is hyperfinite almost everywhere, hence amenable. Since this action is essentially free, Proposition \ref{prop:freeactamen} implies that the action $\SL_n(\Z) \act \R^n$ is amenable, contradicting Proposition \ref{prop:notamenact}.
\end{proof}

\begin{remark}
The definition of amenability used in \cite{connes_amenable_1981} is not the same as Zimmer's definition, however the two are shown to be equivalent in \cite{adams_amenability_1991}.
\end{remark}

\subsubsection*{Property (T) for equivalence relations}
The definition of property $(T)$ for equivalence relation is due to Zimmer in \cite{zimmer_cohomology_1981}. Throughout this section, $E$ is a countable Borel equivalence relation on a standard Borel space $X$ preserving a probability measure $\mu$. A countable Borel equivalence relation $E$ is \textbf{probability measure preserving} if the following equality holds for all $A \subset E$.
\[\int_X \#\{y : (x,y) \in A \} d\mu(x) = \int_X \#\{x : (x,y) \in A \} d\mu(y).\]
In this case, let $\nu(A) = \int_X \#\{y : (x,y) \in A \}) d\mu(x)$. This is a $\sigma$-finite measure. The fact that $\nu$ is $\sigma$-finite can be shown using the Feldman--Moore theorem. One also defines $E^{(2)} = \{(x,y,z) \in X^3: (x,y), (y,z) \in E\}$ and a measure $\nu^{(2)}$ on $E^{(2)}$ defined similarly (see \cite{popa_cocycle_2008}). Let $\mathcal{H}$ be a Hilbert space. In this context, a \textbf{Borel a.e cocycle} is a Borel map $\alpha: E \rightarrow \mathcal{U}(\mathcal{H})$ to the unitary operators on $\mathcal{H}$ such that for almost every $(x,y, z) \in E^{(2)}$, $\alpha(x,y)\alpha(y,z) = \alpha(x,z)$. A cocycle $\alpha$ has \textbf{almost-invariant sections} if there exists a sequence of Borel maps $v_n: X \rightarrow \mathcal{H}$ such that for almost every  $(x,y) \in E$, we have $\lim_n\left(v_n(x) - \alpha(x,y)v_n(y) \right) \rightarrow 0$. In this context, an \textbf{invariant section} is a Borel map $v: X \rightarrow \mathcal{H}$ such that for almost every $(x,y) \in E$,  $\alpha(x,y) v(y) = v(x)$.
\begin{definition}
A probability measure preserving countable Borel equivalence relation $E$ has \textbf{property (T)} if for every Borel a.e cocycle $\alpha:E \rightarrow \mathcal{U}(\mathcal{H})$ with almost-invariant sections, there exists an invariant section.
\end{definition}

\begin{proposition}[\cite{popa_cocycle_2008}]\label{prop:propTpopa}
If $A$ is a subset of $\R^n$ of finite positive measure and $E$ denotes the probability measure preserving equivalence relation on $A$ given by restricting $E(\SL_n(\Z) \act \R^n)$ to $A$, then $E$ has property (T) if and only if $n \geq 4$.
\end{proposition}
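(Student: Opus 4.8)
The plan is to transfer the computation to a group for which property (T) is classical, exploiting the fact that property (T) of a probability measure preserving equivalence relation is invariant under stable orbit equivalence. First I would discard the orbit of $0$ (a null set) and realize the linear action as a homogeneous one. The stabilizer of $e_1$ in $\SL_n(\R)$ is
\[ H = \left\{ \begin{pmatrix} 1 & b \\ 0 & A \end{pmatrix} : A \in \SL_{n-1}(\R),\ b \in \R^{n-1} \right\} \cong \SL_{n-1}(\R) \ltimes \R^{n-1}, \]
so that $\R^n \setminus \{0\} \cong \SL_n(\R)/H$ as $\SL_n(\R)$-spaces, hence as $\SL_n(\Z)$-spaces. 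Since $\SL_n(\Z) \act \R^n$ is ergodic (Moore's ergodicity theorem, as used in Proposition \ref{prop:notamenact}, applies for $n \geq 2$ because $H$ is noncompact), any two sets of finite positive measure are complete sections up to null sets, so the restrictions $E|_A$ and $E|_{A'}$ are stably orbit equivalent; in particular property (T) of $E|_A$ is independent of the choice of $A$, and I am free to argue with a convenient representative.

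Next I would use $\SL_n(\R)$ as a measure equivalence coupling between the lattice $\SL_n(\Z)$ and the closed subgroup $H$. The left $\SL_n(\Z)$-action and right $H$-action on $\SL_n(\R)$ commute; passing to quotients produces on one side $\SL_n(\Z) \act \SL_n(\R)/H = \R^n \setminus \{0\}$ and on the other $H \act \SL_n(\Z) \backslash \SL_n(\R)$, the latter on a finite measure space. The standard cross-section construction then furnishes a stable orbit equivalence between $E(\SL_n(\Z) \act \R^n)$ and the orbit equivalence relation of $H \act \SL_n(\Z) \backslash \SL_n(\R)$. Composing this coupling with $H$ over its lattice $\Lambda := \SL_{n-1}(\Z) \ltimes \Z^{n-1}$, I obtain a free ergodic probability measure preserving action of $\Lambda$ whose orbit relation is stably orbit equivalent to $E|_A$.

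Finally, since property (T) is a stable orbit equivalence invariant and, for a free ergodic probability measure preserving action of a countable group, the orbit relation has property (T) exactly when the acting group does, the problem reduces to deciding when $\Lambda = \SL_{n-1}(\Z) \ltimes \Z^{n-1}$ has property (T). The group $\SL_{n-1}(\Z)$ has property (T) if and only if $n-1 \geq 3$, while the pair $(\SL_{n-1}(\Z) \ltimes \Z^{n-1}, \Z^{n-1})$ has relative property (T) (Kazhdan--Burger) for $n-1 \geq 2$; combining a property (T) quotient with a relative property (T) normal subgroup, $\Lambda$ has property (T) if and only if $n-1 \geq 3$, i.e. $n \geq 4$. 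This also explains the shift from the threshold $n \geq 3$ governing amenability: the transverse stabilizer $H$ carries one fewer dimension than the ambient action.

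The step I expect to be the main obstacle is the rigorous transfer through the coupling. The homogeneous space $\R^n \setminus \{0\}$ has infinite $\SL_n(\Z)$-invariant measure, so the measure equivalence is realized by an infinite coupling and yields only a stable orbit equivalence; one must verify that the cross-section construction restricts correctly to the finite measure set $A$ and that property (T) is insensitive to this restriction, which holds by ergodicity together with the stable-orbit-equivalence invariance of property (T). This delicate measure-theoretic bookkeeping is exactly the content extracted from \cite{popa_cocycle_2008}, whose computation I would cite rather than reproduce.
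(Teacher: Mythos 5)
The paper offers no proof of this proposition: it is stated as a black-box citation to \cite{popa_cocycle_2008}, so there is no internal argument to compare yours against. Your sketch is a reasonable reconstruction of the mechanism behind the cited result, and the endgame is right: the threshold is governed by the point stabilizer $H \cong \SL_{n-1}(\R) \ltimes \R^{n-1}$, equivalently by its lattice $\Lambda = \SL_{n-1}(\Z) \ltimes \Z^{n-1}$, and your property (T) computation for $\Lambda$ (Kazhdan for $\SL_{n-1}(\Z)$ when $n-1 \geq 3$, Burger's relative property (T) for the pair with $\Z^{n-1}$, failure for $n \leq 3$ via the quotient $\SL_{n-1}(\Z)$) is correct. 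Two cautions on the transfer step, which is where all the content sits. First, your phrase ``measure equivalence coupling'' is not accurate as stated: $\SL_n(\R)$ is not an ME coupling of $\SL_n(\Z)$ with $H$ (or with $\Lambda$) in the standard sense, since $\SL_n(\R)/H$ has infinite invariant volume, and infinite couplings preserve nothing in general; what one actually uses is that $E(\SL_n(\Z) \act \R^n)\restriction_A$ is a reduction of the measured groupoid $(\SL_n(\Z) \times H) \ltimes \SL_n(\R)$ to a finite-measure piece, so that it is stably isomorphic to a cross-section equivalence relation of $H \act \SL_n(\Z)\backslash\SL_n(\R)$, together with the facts that property (T) of an ergodic probability measure preserving countable relation is unaffected by restriction to a positive-measure subset and that the cross-section relation of a free ergodic probability measure preserving action of an lcsc group has property (T) exactly when the group does. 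Second, your intermediate claim that composing with $\Lambda$ produces a free probability measure preserving $\Lambda$-action \emph{stably orbit equivalent} to $E\restriction_A$ is the step most likely to need repair: replacing $G/H$ by $G/\Lambda$ changes the relation by a nontrivial fiber $H/\Lambda$, and it is cleaner to stay at the level of $H$ and measured groupoids than to insist on landing on a literal $\Lambda$-action. None of this changes the answer, and you are right that this bookkeeping is exactly what the citation to \cite{popa_cocycle_2008} is carrying; but if you intend your sketch as a proof rather than a pointer, these are the two places it must be made precise.
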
 

An measure preserving countable equivalence relation $E$ is \textbf{nowhere treeable} if its restriction to any positive measure subset is not treeable. The fact about property (T) equivalence relations relevant to us is the following:

\begin{proposition}[\cite{adams_kazhdan_1990}]\label{prop:adamsspatzier}
Suppose $(X,E)$ is an ergodic probability measure preserving countable equivalence relation satisfying Kazdhan property (T) with no conull equivalence class. Then $E$ is nowhere treeable, in particular, it is not treeable.
\end{proposition}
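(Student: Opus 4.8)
The plan is to argue by contradiction, along the lines of Adams and Spatzier, exploiting the tension between property (T) and the branching geometry of a tree. Suppose that $E|_B$ were treeable for some $B$ of positive measure. Restricting to $B$ and renormalizing the measure, property (T), ergodicity and the probability-measure-preserving property are all inherited, so we may assume $E$ itself is treeable, say by a Borel treeing that equips each class $[x]_E$ with a tree $T_x$ whose vertex set is the class. By ergodicity the cardinality of a class is almost surely constant, and the only case in which treeability carries content is that of infinite classes; this is also the relevant case, since the classes of $E(\SL_n(\Z)\act\R^n)$ meet any positive-measure set infinitely often by recurrence. Thus we may assume $T_x$ is an infinite tree for almost every $x$.

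Next I would view the treeing as dynamical data. After a measurable identification of the fibers $T_x$ with a fixed model tree $T$, the treeing induces a Borel cocycle of $E$ into $\Aut(T)$, and $E$ acts on the associated bundle of end spaces $x \mapsto \partial T_x$ with a quasi-invariant measure. The crucial structural input is that this boundary action is amenable in the sense of Zimmer: the action of a tree's automorphism group on its ends is amenable, and amenability is inherited by the fiberwise (groupoid) action of $E$. This is precisely what makes property (T), a strong negation of amenability, usable against the treeing.

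The heart of the argument is to combine property (T) with amenability of the boundary action to force a reduction of the cocycle. Concretely, I would show that property (T) forces the $\Aut(T)$-valued cocycle to be cohomologous to one taking values in a subgroup that fixes either a vertex of $T$ (equivalently, a bounded set) or a single end of $T$. This is the measured-groupoid analogue of the classical fact that a property (T) group acting on a tree must fix a vertex or an end: one encodes the amenable boundary action in a unitary cocycle with almost-invariant sections and then uses the definition of property (T) to extract an invariant section, which records the fixed vertex or the fixed end. I expect this to be the main obstacle, both because it is the genuine rigidity input and because treeings need not be locally finite, so the tree metric is not proper and one is forced to work with the boundary rather than with metric balls.

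Finally I would derive a contradiction in each case. If the reduction fixes a vertex, then there is an $E$-invariant measurable choice of one vertex per class, i.e. a Borel transversal, so $E$ is smooth and hence hyperfinite. If instead it fixes an end $\xi_x$, then orienting every edge of $T_x$ toward $\xi_x$ gives each vertex a unique ``parent,'' yielding a Borel map $f$ with $(x,f(x))\in E$ that generates $E$ (any two vertices of $T_x$ have their parent-rays merge at the confluent point toward $\xi_x$); and the orbit equivalence relation generated by a single Borel function is hyperfinite. Either way $E$ is hyperfinite, hence amenable. But an infinite ergodic probability-measure-preserving amenable relation cannot have property (T), since such a relation admits almost-invariant sections that are not invariant, contradicting the hypothesis that $E$ has property (T) with infinite classes. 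This contradiction shows that $E$ is nowhere treeable, and in particular not treeable.
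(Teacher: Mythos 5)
The paper does not actually prove this proposition: it is imported verbatim from Adams--Spatzier \cite{adams_kazhdan_1990}, so there is no in-paper argument to compare against, and your sketch has to stand on its own as a reproof of that theorem. Its architecture does follow the Adams--Spatzier strategy, and your endgame is sound: an invariant vertex gives a transversal, hence smoothness; an invariant end gives a ``parent'' map whose tail equivalence relation is $E$, hence hyperfiniteness; and an ergodic p.m.p.\ hyperfinite relation with infinite classes cannot have (T). (Even there, two points need justification: that restricting to a positive-measure set preserves property (T), and that the almost-invariant sections coming from hyperfiniteness admit no invariant section --- the latter is a mass-transport argument showing an invariant field of probability measures on infinite classes cannot exist.)

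The genuine gaps are in the middle. First, the reduction to a cocycle into $\Aut(T)$ for a fixed model tree $T$ is not available: the trees $T_x$ of a treeing need not be pairwise isomorphic, and even on the set where they are, the isomorphism relation on countable trees is not smooth, so there is no measurable identification of the fibers with a single $T$. The argument has to be run directly on the measurable field of trees and its field of end spaces. Second, and more seriously, the heart of the proof --- that property (T) together with amenability of the action on the field of ends forces an invariant vertex or end --- is asserted rather than carried out; this step \emph{is} the theorem, and you yourself flag it as the main obstacle. As stated it is also not quite the right dichotomy: the classical fact for groups is property (FA) (a Kazhdan group acting on a tree fixes a vertex or inverts an edge, with no ``fixed end'' alternative), while the measured version requires a trichotomy that also includes an invariant \emph{pair} of ends (which still yields hyperfiniteness via the invariant line, so the conclusion survives, but the case must be treated). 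A cleaner route, which avoids ends and boundary amenability entirely, is to use the tree metric $d_{T_x}$ as a conditionally negative definite kernel on $E$: Schoenberg's correspondence turns $e^{-t d}$ into unitary cocycles with almost-invariant sections, property (T) then forces $d$ to be essentially bounded on classes, a tree of finite diameter has a canonical center, and selecting the center gives a transversal, contradicting ergodicity with no conull class. I would either fill in the boundary argument in detail or switch to this kernel argument; as written, the proposal is a correct roadmap but not yet a proof.
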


\begin{proposition}\label{prop:nontree4}
If $n\geq 4$, the equivalence relation $E(\SL_n(\Z) \act \R^n_\text{ti}$) is not treeable. 
\end{proposition}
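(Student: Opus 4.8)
The plan is to argue by contradiction, replacing the amenability input used for Corollary \ref{prop:nonamen3} with the property (T) machinery of Propositions \ref{prop:propTpopa} and \ref{prop:adamsspatzier}. Since treeability passes to Borel reductions (Proposition \ref{prop:complexclassJKL}(a)), it suffices to produce a positive-measure restriction of $E(\SL_n(\Z) \act \R^n_\text{ti})$ that is \emph{not} treeable; the Adams--Spatzier theorem will supply this as soon as we can certify that the restriction is ergodic, probability measure preserving, has property (T), and has no conull class.

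First I would fix a bounded Borel set of positive Lebesgue measure, say an open ball $B \subset \R^n$, and put $A := B \cap \R^n_\text{ti}$. By Lemma \ref{lemma:fullmeas} the set $\R^n_\text{ti}$ is conull, so $A$ has finite positive measure; and because $A \subseteq \R^n_\text{ti}$, restricting $E(\SL_n(\Z) \act \R^n)$ to $A$ yields exactly $E_A := E(\SL_n(\Z) \act \R^n_\text{ti})|_A$. The action is by determinant-one matrices and hence preserves Lebesgue measure, so after normalizing the measure on $A$ this $E_A$ is precisely the probability measure preserving relation to which Proposition \ref{prop:propTpopa} applies. As $n \geq 4$, that proposition gives that $E_A$ has property (T).

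Next I would check the remaining hypotheses of Proposition \ref{prop:adamsspatzier}. For ergodicity, recall that $E(\SL_n(\Z) \act \R^n)$ is ergodic by Moore's ergodicity theorem (as already used in Proposition \ref{prop:notamenact}); if $C \subseteq A$ is $E_A$-invariant then $C = [C]_E \cap A$, its saturation $[C]_E$ is $E$-invariant, and ergodicity forces $[C]_E$ to be null or conull, whence $\mu(C) = 0$ or $\mu(A \setminus C) = \mu((\R^n \setminus [C]_E) \cap A) = 0$. Thus $E_A$ is ergodic. It has no conull class because each $\SL_n(\Z)$-orbit is countable and therefore Lebesgue-null in $A$. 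Proposition \ref{prop:adamsspatzier} now yields that $E_A$ is not treeable.

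Finally, suppose for contradiction that $E(\SL_n(\Z) \act \R^n_\text{ti})$ were treeable. The inclusion $A \hookrightarrow \R^n_\text{ti}$ is a Borel reduction of $E_A$ into $E(\SL_n(\Z) \act \R^n_\text{ti})$, so Proposition \ref{prop:complexclassJKL}(a) would make $E_A$ treeable, contradicting the previous step. The genuine content is entirely imported from Popa's property (T) computation and the Adams--Spatzier theorem; the only point demanding care is the dual role of $A$, which must sit inside $\R^n_\text{ti}$ so that $E_A$ is simultaneously the relation covered by Proposition \ref{prop:propTpopa} and Borel reducible to the relation under study.
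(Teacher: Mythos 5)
Your proof is correct and follows essentially the same route as the paper: restrict to a finite-positive-measure subset of $\R^n_\text{ti}$, invoke Proposition \ref{prop:propTpopa} and Proposition \ref{prop:adamsspatzier} to get a non-treeable restriction, and transfer non-treeability back to the full relation. The only (harmless, slightly cleaner) difference is that the paper takes $A = [0,1]^n \cap \R^n_\text{ti}$ and uses that it is a \emph{complete section} together with Proposition \ref{prop:section}, whereas you only need the inclusion $A \hookrightarrow \R^n_\text{ti}$ as a Borel reduction $E_A \leq_B E(\SL_n(\Z) \act \R^n_\text{ti})$; you also verify the ergodicity and no-conull-class hypotheses of Proposition \ref{prop:adamsspatzier} explicitly, which the paper leaves implicit.
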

\begin{proof}
Suppose $n \geq 4$. Because $A = [0,1]^n \cap \R^n_\text{ti}$ has measure $1$, the restriction of the orbit equivalence relation to $A$ has property (T) by Proposition \ref{prop:propTpopa}. Thus, by Proposition \ref{prop:adamsspatzier}, it is not treeable. Further, $A$ is a complete section for $E(\SL_n(\Z) \act \R^n_\text{ti})$. Hence, by Propositions \ref{prop:section} and \ref{prop:complexclassJKL}, $E(\SL_n(\Z) \act \R^n_\text{ti})$ is also not treeable.
\end{proof}

\subsection{Complexity theorems for Archimedean Orders}
\begin{lemma}\label{lemma:mainlem}
If the equivalence relation $E(\GL_n(\Z) \act \Ar(\Z^n))$ is hyperfinite (resp. treeable), then so is $E(\SL_n(\Z) \act \Ar(\R^n))$ on a Lebesgue-co-null set.
\end{lemma}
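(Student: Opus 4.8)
The plan is to route the hypothesis through the correspondences established above and reduce the statement to understanding the quotient by positive scaling. First, Proposition \ref{prop:ArOrderStructure} gives a $\GL_n(\Z)$-equivariant Borel bijection $\Ar(\Z^n)\cong(\R^n)^\ast_{\text{ti}}/\R_{>0}$, so the assumption that $E(\GL_n(\Z)\act\Ar(\Z^n))$ is hyperfinite (resp. treeable) is equivalent to the same for $E(\GL_n(\Z)\act(\R^n)^\ast_{\text{ti}}/\R_{>0})$. Restricting the acting group to $\SL_n(\Z)\le\GL_n(\Z)$ only shrinks orbits, so $F:=E(\SL_n(\Z)\act(\R^n)^\ast_{\text{ti}}/\R_{>0})$ is a subequivalence relation of the $\GL_n(\Z)$-relation; by Proposition \ref{prop:complexclassJKL}(b) it is again hyperfinite (resp. treeable).

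The substance of the argument is to \emph{unfold} the quotient by $\R_{>0}$. I would realize $(\R^n)^\ast_{\text{ti}}/\R_{>0}$ concretely as the transversal $S=\{f\in(\R^n)^\ast_{\text{ti}}:\|f\|=1\}$ and set $q(f)=f/\|f\|$; since scalars commute with matrices, $q$ is $\SL_n(\Z)$-equivariant, hence a Borel homomorphism from $E(\SL_n(\Z)\act(\R^n)^\ast_{\text{ti}})$ to $F$. To invoke Proposition \ref{prop:complexclassCK} I need $q$ to be class-bijective, but this can fail: if some $S\in\SL_n(\Z)$ satisfies $Sf=rf$ with $r>0$ and $r\neq1$, then $f$ and $Sf$ share an orbit yet lie on one ray, so $q$ collapses them. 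I would discard exactly these functionals, setting $B=\bigcup_{S\in\SL_n(\Z)}\bigcup_{r}\ker(S-r\,\id)$, the inner union over the (finitely many per $S$) positive real eigenvalues $r\neq1$. Each $\ker(S-r\,\id)$ is a proper linear subspace, so $B$ is a countable union of null sets and therefore null, as in the proof of Lemma \ref{lemma:fullmeas}; moreover $B$ is $\SL_n(\Z)$-invariant, since $Tf$ is then scaled by $TST^{-1}$. On the conull invariant set $Y=(\R^n)^\ast_{\text{ti}}\setminus B$ the map $q$ is injective on each orbit, and it remains surjective onto the corresponding $F$-class because $Y$ is invariant. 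Thus $q|_Y$ is a class-bijective homomorphism from $E(\SL_n(\Z)\act Y)$ to $F$, and Proposition \ref{prop:complexclassCK} yields that $E(\SL_n(\Z)\act Y)$ is hyperfinite (resp. treeable).

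Finally I would transport this back to $\R^n$. The isomorphism $\widehat{\cdot}\colon\R^n\to(\R^n)^\ast$ carries $\R^n_{\text{ti}}$ onto $(\R^n)^\ast_{\text{ti}}$ and, exactly as in Lemma \ref{prop:bireddual}, intertwines the standard $\SL_n(\Z)$-action on $\R^n$ with the dual action twisted by the automorphism $T\mapsto(T^{-1})^\ast$ of $\SL_n(\Z)$; as this automorphism is a bijection of $\SL_n(\Z)$, the two orbit equivalence relations agree under $\widehat{\cdot}$. Hence $Y'=\widehat{\cdot}^{-1}(Y)\subseteq\R^n_{\text{ti}}$ is conull (Lemma \ref{lemma:fullmeas}) and invariant, and $E(\SL_n(\Z)\act\R^n)|_{Y'}$ is hyperfinite (resp. treeable), which is the claimed conclusion on a Lebesgue-co-null set.

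The step I expect to be the main obstacle is the unfolding in the middle: controlling the failure of injectivity introduced by the scalar quotient. Everything hinges on showing that this degeneracy is confined to the eigenvector set $B$, that $B$ is simultaneously null and $\SL_n(\Z)$-invariant, and that deleting it restores injectivity on orbits without destroying surjectivity onto full $F$-classes. Once $q|_Y$ is seen to be class-bijective, the remainder is a bookkeeping chain of the cited closure properties (Propositions \ref{prop:complexclassJKL} and \ref{prop:complexclassCK}) together with the identifications of Proposition \ref{prop:ArOrderStructure} and Lemma \ref{prop:bireddual}.
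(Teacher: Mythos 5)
Your argument follows the same skeleton as the paper's proof: identify $\Ar(\Z^n)$ with $\left(\R^n\right)^\ast_{\text{ti}}/\R_{>0}$ via Proposition \ref{prop:ArOrderStructure}, unfold the scalar quotient through a class-bijective homomorphism and Proposition \ref{prop:complexclassCK}, transport across the duality of Lemma \ref{prop:bireddual}, and pass between $\GL_n(\Z)$ and $\SL_n(\Z)$ via Proposition \ref{prop:complexclassJKL}(b) (you restrict to $\SL_n(\Z)$ at the start, the paper at the end; this is immaterial). The genuine difference is precisely the step you flagged as the main obstacle. The paper asserts that the quotient map $\left(\R^n\right)^\ast_{\text{ti}} \to \left(\R^n\right)^\ast_{\text{ti}}/\R_{>0}$ is class-injective on the whole space, justified only by the observation that $rI \in \GL_n(\Z)$ forces $r = \pm 1$; this does not rule out $T \cdot f = rf$ with $T \neq rI$, and such pairs do exist among totally irrational functionals --- integer matrices can have totally irrational eigenvectors with positive irrational eigenvalues (units of real quadratic orders), e.g.\ the eigenvector $(1,\sqrt{2})$ with eigenvalue $1+\sqrt{2}$ for a suitable element of $\GL_2(\Z)$, and such blocks embed into $\SL_n(\Z)$ for the relevant $n$. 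So the quotient map genuinely collapses distinct points of a single orbit on a nonempty (but null) set, and your repair --- deleting the $\SL_n(\Z)$-invariant null set $B$ of eigenvectors with positive eigenvalue $\neq 1$ before applying Proposition \ref{prop:complexclassCK} --- is exactly what is needed. It costs nothing here, since the lemma only claims a conclusion on a Lebesgue-co-null set and the downstream applications (amenability, which is hyperfiniteness almost everywhere, and nowhere-treeability from property (T)) are insensitive to discarding a null invariant set. In short: your proof is correct, takes the same route as the paper, and is more careful than the paper's own write-up at the one point where care is actually required.
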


\begin{proof}
Suppose $E(\GL_n(\Z) \act \Ar(\Z^n))$ is hyperfinite (resp. treeable). By Proposition \ref{prop:ArOrderStructure}, there is a $\GL_n(\Z)$-equivariant bijection $\Ar(\Z^n) \rightarrow \left (\R^n\right )^\ast_\text{ti}/\R_{>0}$. This shows that 
\[E(\GL_n(\Z) \act \Ar(\Z^n)) \sim_B E(\GL_n(\Z) \act \left (\R^n\right )^\ast_\text{ti}/\R_{>0}).\]
By Proposition \ref{prop:complexclassJKL}, this implies $E(\GL_n(\Z) \act \left (\R^n\right )^\ast_\text{ti}/\R_{>0})$, is also hyperfinite (resp. treeable). The quotient map $\left (\R^n\right )^\ast_\text{ti} \rightarrow \left (\R^n\right )^\ast_\text{ti}/\R_{>0}$ is a homomorphism from $E(\GL_n(\Z) \act \left (\R^n\right )^\ast_\text{ti})$ to $E(\GL_n(\Z) \act \left (\R^n\right )^\ast_\text{ti}/\R_{>0}$. This is because the $\GL_n(\Z)$ action commutes with scalar multiplication:
\[T \cdot f = f'\; \Longrightarrow \;\forall r \in \R_{>0}, \, T \cdot (rf) = rf' \;\Longrightarrow\; T [f] = [f'].  \]
Furthermore we also show that the quotient map is class-bijective. Injectivity on classes follows from the fact that if $rI \in \GL_n(\Z)$, then $r=\pm1$. Surjectivity follows again from the fact that the action commutes with scalar multiplication. In particular, $T \cdot f \in [T \cdot f] = T \cdot [f]$. We also have that $E(\GL_n(\Z) \act \left (\R^n\right )^\ast_\text{ti}) \sim_B E(\GL_n(\Z) \act \R^{n}_\text{ti})$ by Proposition \ref{prop:bireddual}. By the properties of class-bijective homomorphism established in Proposition \ref{prop:complexclassCK}, this implies that $E(\GL_n(\Z) \act \left (\R^n\right )^\ast_\text{ti})$ is hyperfinite (resp. treeable). This implies that $E(\GL_n(\Z) \act \R^{n}_\text{ti})$ is also hyperfinite (resp. treeable). Further, we have that $E(\SL_n(\Z) \act \R^{n}_\text{ti}) \subset E(\GL_n(\Z) \act \R^{n}_\text{ti})$ is a subequivalence relation. By properties of subequivalence relations, established in Proposition \ref{prop:complexclassJKL}, this implies that $E(\SL_n(\Z) \act \R^n_\text{ti})$ is hyperfinite (resp. treeable). This is what we wanted to show.
\end{proof}
 We are now ready to prove the two main theorems. 
\begin{theorem}\label{thm:noamen3}
The equivalence relation $E(\GL_n(\Z) \act \Ar(\Z^n))$ is not hyperfinite if $n \geq 3$.
\end{theorem}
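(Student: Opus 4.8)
The plan is to prove the contrapositive by chaining the reduction packaged in Lemma \ref{lemma:mainlem} against the non-hyperfiniteness packaged in Corollary \ref{prop:nonamen3}. Since essentially all of the analytic content has already been absorbed into these two results, the argument for the theorem itself is short and purely deductive: it is a one-line contradiction.

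Concretely, I would argue by contradiction. Suppose $n \geq 3$ and that $E(\GL_n(\Z) \act \Ar(\Z^n))$ is hyperfinite. Applying the hyperfinite case of Lemma \ref{lemma:mainlem}, I obtain that $E(\SL_n(\Z) \act \R^n_\text{ti})$ is hyperfinite. (Tracing the lemma's proof, hyperfiniteness is transported across the $\GL_n(\Z)$-equivariant bijection of Proposition \ref{prop:ArOrderStructure}, then through the quotient by $\R_{>0}$ as a class-bijective homomorphism, then across the passage to the dual via Proposition \ref{prop:bireddual}, and finally down to the $\SL_n(\Z)$-subequivalence relation using the subequivalence clause of Proposition \ref{prop:complexclassJKL}.) But Corollary \ref{prop:nonamen3} asserts precisely that $E(\SL_n(\Z) \act \R^n_\text{ti})$ is \emph{not} hyperfinite when $n \geq 3$. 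This contradiction forces $E(\GL_n(\Z) \act \Ar(\Z^n))$ to be non-hyperfinite, which is the claim.

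The step carrying the real weight is not in this deduction but upstream, inside Corollary \ref{prop:nonamen3}: there, full measure of $\R^n_\text{ti}$ (Lemma \ref{lemma:fullmeas}) upgrades hyperfiniteness to hyperfiniteness almost everywhere, the amenability–hyperfiniteness equivalence converts this into amenability of the orbit equivalence relation, essential freeness together with Proposition \ref{prop:freeactamen} transports amenability to the action $\SL_n(\Z) \act \R^n$ itself, and the lattice argument of Proposition \ref{prop:notamenact} derives a contradiction from the non-amenability of $\SL_n(\R) \act \R^n$ for $n \geq 3$. Within the present theorem, the only point requiring genuine care is bookkeeping: I must check that the object whose hyperfiniteness Lemma \ref{lemma:mainlem} actually delivers is literally $E(\SL_n(\Z) \act \R^n_\text{ti})$ (and not merely the orbit relation on some vaguely specified conull set), so that Corollary \ref{prop:nonamen3} applies verbatim with no measure-theoretic slack to absorb.
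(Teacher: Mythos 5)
Your proposal is correct and is exactly the paper's argument: the paper's proof of this theorem is the one-line deduction from Lemma \ref{lemma:mainlem} and Corollary \ref{prop:nonamen3}, with all the analytic content living upstream just as you describe. Your bookkeeping worry is reasonable given that the lemma's statement refers loosely to a co-null set, but its proof concludes hyperfiniteness of $E(\SL_n(\Z) \act \R^n_\text{ti})$ precisely, so Corollary \ref{prop:nonamen3} applies verbatim.
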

\begin{proof}
This follows from Lemma \ref{lemma:mainlem} and Proposition \ref{prop:nonamen3}.
\end{proof}

\begin{theorem}\label{thm:notree4}
The equivalence relation $E(\GL_n(\Z) \act \Ar(\Z^n))$ is not treeable if $n \geq 4$.
\end{theorem}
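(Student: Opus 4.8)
The proof of Theorem \ref{thm:notree4} follows exactly the same template as that of Theorem \ref{thm:noamen3}, simply substituting ``treeable'' for ``hyperfinite'' and invoking the treeability obstruction instead of the amenability obstruction. Concretely, the plan is to combine Lemma \ref{lemma:mainlem} with Proposition \ref{prop:nontree4}.

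First I would assume for contradiction that $E(\GL_n(\Z) \act \Ar(\Z^n))$ is treeable for some $n \geq 4$. Lemma \ref{lemma:mainlem} was stated to apply verbatim in the treeable case (the ``(resp. treeable)'' clauses run through the entire chain of reductions, bireductions, class-bijective homomorphisms, and subequivalence relations). Applying it, I conclude that the restriction of $E(\SL_n(\Z) \act \Ar(\R^n))$ to a Lebesgue-co-null set is treeable; since the relevant co-null set is $\R^n_\text{ti}$ (via the identification of $\Ar(\R^n)$ with totally irrational functionals up to scalar), this says precisely that $E(\SL_n(\Z) \act \R^n_\text{ti})$ is treeable.

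This contradicts Proposition \ref{prop:nontree4}, which asserts that $E(\SL_n(\Z) \act \R^n_\text{ti})$ is not treeable for $n \geq 4$. Hence the assumption is false and the theorem follows.

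The only point requiring care — and the spot I would flag as the genuine, if modest, content of the argument — is ensuring that Lemma \ref{lemma:mainlem}'s conclusion about a ``Lebesgue-co-null set'' is correctly matched to the hypothesis of Proposition \ref{prop:nontree4}. The lemma is phrased in terms of $E(\SL_n(\Z) \act \Ar(\R^n))$ on a co-null set, whereas the proposition is phrased in terms of $E(\SL_n(\Z) \act \R^n_\text{ti})$. These coincide because $\R^n_\text{ti}$ is co-null by Lemma \ref{lemma:fullmeas} and the $\GL_n(\Z)$-equivariant identification of Proposition \ref{prop:ArOrderStructure} carries the action on Archimedean orders to the action on totally irrational functionals (equivalently, via Proposition \ref{prop:bireddual}, on totally irrational vectors). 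Once this dictionary is in place, no new ideas are needed beyond what already appears in the proof of Theorem \ref{thm:noamen3}.

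\begin{proof}
This follows from Lemma \ref{lemma:mainlem} and Proposition \ref{prop:nontree4}.
\end{proof}
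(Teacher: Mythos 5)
Your proposal is correct and matches the paper's proof exactly: the paper also derives Theorem \ref{thm:notree4} immediately from Lemma \ref{lemma:mainlem} and Proposition \ref{prop:nontree4}. Your additional remarks about matching the co-null set in the lemma's conclusion to the domain $\R^n_\text{ti}$ in the proposition are a reasonable clarification but introduce nothing beyond the paper's argument.
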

\begin{proof}
This follows from Lemma \ref{lemma:mainlem} and Proposition \ref{prop:nontree4}.
\end{proof}

\begin{corollary}
The equivalence relation $E(\GL_n(\Q) \act \Ar(\Q^n))$ is not hyperfinite if $n \geq 3$ and not treeable if $n \geq 4$.
\end{corollary}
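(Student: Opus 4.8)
The plan is to exhibit $E(\GL_n(\Z) \act \Ar(\Z^n))$ as (a Borel-isomorphic copy of) a subequivalence relation of $E(\GL_n(\Q) \act \Ar(\Q^n))$ and then invoke Theorems \ref{thm:noamen3} and \ref{thm:notree4} together with the permanence properties of Proposition \ref{prop:complexclassJKL}. Note first that $\Aut(\Q^n) = \GL_n(\Q)$, since any group endomorphism of $\Q^n$ is automatically $\Q$-linear, and that $\GL_n(\Z) \subseteq \GL_n(\Q)$, so $\GL_n(\Z)$ acts on $\Ar(\Q^n)$ as a subgroup. Consequently $E(\GL_n(\Z) \act \Ar(\Q^n))$ is, by definition, a subequivalence relation of $E(\GL_n(\Q) \act \Ar(\Q^n))$ on the common space $\Ar(\Q^n)$.

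The heart of the argument is a $\GL_n(\Z)$-equivariant Borel isomorphism $\iota : \Ar(\Z^n) \to \Ar(\Q^n)$. I would take $\iota$ to be the unique extension of an order from $\Z^n$ to $\Q^n$, obtained by clearing denominators: a point $x \in \Q^n$ lies in $\iota(P)$ iff $kx \in P$, where $k \in \N_{>0}$ is least with $kx \in \Z^n$. Its inverse is the restriction map $\rho(Q) = Q \cap \Z^n$. The facts to verify are routine and all rest on the single observation that $x$ and $kx$ have the same sign in any order: $\iota(P)$ is a genuine Archimedean positive cone on $\Q^n$ (with associated H\"older functional that of $P$); $\rho(Q)$ is Archimedean on $\Z^n$, since an Archimedean inequality $y <_Q mx$ with $x,y \in \Z^n$ has integer witness $mx-y$ and so survives restriction; and $\rho \circ \iota = \id$, $\iota \circ \rho = \id$, expressing that an Archimedean order on $\Q^n$ is completely determined by its restriction to $\Z^n$. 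Equivariance under $T \in \GL_n(\Z)$ is immediate from $T\Z^n = \Z^n$, and both maps are Borel because membership of each fixed rational (resp. integer) point is a clopen condition on the respective order space. Equivalently, one may route everything through the common parameter space $\left(\R^n\right)^\ast_\text{ti}/\R_{>0}$ of Proposition \ref{prop:ArOrderStructure}, using that a functional injective on $\Z^n$ is, after clearing denominators, automatically injective on $\Q^n$, so that Proposition \ref{prop:ArOrderStructure} admits a verbatim $\GL_n(\Q)$-analogue for $\Ar(\Q^n)$.

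With $\iota$ in hand, it intertwines $E(\GL_n(\Z) \act \Ar(\Z^n))$ with $E(\GL_n(\Z) \act \Ar(\Q^n))$, so these relations are Borel isomorphic. Suppose toward a contradiction that $n \geq 3$ and $E(\GL_n(\Q) \act \Ar(\Q^n))$ is hyperfinite. By Proposition \ref{prop:complexclassJKL}(b) its subequivalence relation $E(\GL_n(\Z) \act \Ar(\Q^n))$ is hyperfinite, and transporting along $\iota$ via Proposition \ref{prop:complexclassJKL}(a) makes $E(\GL_n(\Z) \act \Ar(\Z^n))$ hyperfinite, contradicting Theorem \ref{thm:noamen3}. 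The treeable case for $n \geq 4$ is identical, with Theorem \ref{thm:notree4} in place of Theorem \ref{thm:noamen3}. There is no serious obstacle beyond the theorems already proved; the one point deserving care is to keep the notion of subequivalence relation (which requires a fixed ambient space) distinct from Borel reduction. Indeed $\iota$ is \emph{not} a reduction into $E(\GL_n(\Q) \act \Ar(\Q^n))$, since a rational change of basis can identify two $\Z^n$-orders that no integral one identifies; this is precisely why the argument passes through the intermediate relation $E(\GL_n(\Z) \act \Ar(\Q^n))$ rather than reducing directly.
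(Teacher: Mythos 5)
Your proposal is correct and follows essentially the same route as the paper: both realize $E(\GL_n(\Z) \act \Ar(\Z^n))$ as a subequivalence relation of $E(\GL_n(\Q) \act \Ar(\Q^n))$ (the paper via the common identification with $\left(\R^n\right)^\ast_\text{ti}/\R_{>0}$ from H\"older's theorem, you via the equivalent explicit extension/restriction isomorphism $\iota$, which you also note reduces to the functional picture) and then apply Proposition \ref{prop:complexclassJKL} together with Theorems \ref{thm:noamen3} and \ref{thm:notree4}. Your care in distinguishing the subrelation step from the reduction step is a nice touch, but the underlying argument is the paper's.
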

\begin{proof}
As a consequence of H\"{o}lder's theorem, cited here as Theorem \ref{thm:Holder}, we get that $\Ar(\Q^n)$ is also identifiable with $(\R^n)^*_{ti} / \R_{>0}$ in a $\GL_n(\Q)$ equivariant way. Thus, $E(\GL_n(\Q) \act \Ar(\Q^n))$ contains $E(\GL_n(\Z) \act \Ar(\Z^n))$ as a subequivalence relation. By Proposition \ref{prop:complexclassJKL} a), if $E(\GL_n(\Q) \act \Ar(\Q^n))$ is hyperfinite (resp. treeable), then so is $E(\GL_n(\Z) \act \Ar(\Z^n))$. The corollary then follows from Theorems \ref{thm:noamen3} and \ref{thm:notree4}.
\end{proof}

\section{Open questions}

From the use of class-bijective homomorphisms, we do not know what is the link between the isomorphism of Archimedean orders on $\Z^n$ and the complexity of $\GL_n(\R) \act \R^n$. We thus ask the following question:
\begin{question}
Does $E(\GL_n(\Z) \act \Ar (\Z^n))$ admits a Borel reduction to $E(\GL_3(\Z) \act \R^3)$?
\end{question}

One question which remains is 
\begin{question}
Is $E(\GL_3(\Z) \act \Ar (\Z^3))$ treeable?
\end{question}

As pointed out in the introduction, our theorems apply to all finitely generated groups admitting an Archimedean order. Letting the number of generators tend to infinity, we can consider the equivalence relation of automorphsm on all finitely generated Archimedean-orderable groups on the space
\[\Ar(\Z^2) \sqcup ... \sqcup \Ar(\Z^n) \sqcup ... \]

\begin{question}
What is the complexity of automorphism on all finitely generated Archimedean-orderable groups?
\end{question}

Two other questions arise naturally:
\begin{question}
What is the Borel complexity of $E(\GL_2(\Z[\frac{1}{n}]^2) \act \Ar (\Z[\frac{1}{n}]^2))$ ?
\end{question}
It follows from H\"{o}lder's theorem that for $n!$, the relations increase to the one on $\Q^2$. If one can show hyperfiniteness for these relations, this gives a natural example to study the Borel increasing union problem. An interesting fact to note is that in the case $\Z[\frac{1}{p}]^2$, these groups are lattices in $\SL_2(\R) \times \SL_2(\Q_p)$, as opposed to $\SL_2(\Z)$ being a lattice in $\SL_2(\R)$.

Another question is whether the family of relations $E(\GL_n(\Z^n) \act \Ar (\Z^n) $ exhibits rigidity in some form. For example, following $\cite{calderoni_rotation_2023}$, we could ask
\begin{question}
For $n < m$ large enough, can $E(\GL_m(\Z^m) \act \Ar (\Z^m))$ reduce to $E(\GL_n(\Z^n) \act \Ar (\Z^n) )$? Does $E(\GL_n(\Z^n) \act \Ar (\Z^n))$ always reduce to $E(\GL_m(\Z^m) \act \Ar (\Z^m)) $?
\end{question}

\bibliography{references.bib} 

\begin{thebibliography}{CMMRS23}

\bibitem[AL91]{adams_amenability_1991}
S.~Adams and R.~Lyons.
\newblock Amenability, {Kazhdan}’s property and percolation for trees, groups
  and equivalence relations.
\newblock {\em Israel Journal of Mathematics}, 75(2):341--370, October 1991.

\bibitem[AS90]{adams_kazhdan_1990}
S.~R. Adams and R.~J. Spatzier.
\newblock Kazhdan {Groups}, {Cocycles} and {Trees}.
\newblock {\em American Journal of Mathematics}, 112(2):271--287, 1990.
\newblock Publisher: Johns Hopkins University Press.

\bibitem[Cal23]{calderoni_rotation_2023}
F.~Calderoni.
\newblock Rotation equivalence and cocycle superrigidity.
\newblock {\em Journal of the London Mathematical Society}, 107(1):189--212,
  2023.
\newblock \_eprint: https://onlinelibrary.wiley.com/doi/pdf/10.1112/jlms.12684.

\bibitem[CC23a]{calderoni_borel_2023}
F.~Calderoni and A.~Clay.
\newblock The {Borel} complexity of the space of left-orderings,
  low-dimensional topology, and dynamics, May 2023.

\bibitem[CC23b]{calderoni_condensation_2023}
F.~Calderoni and A.~Clay.
\newblock Condensation and left-orderable groups, December 2023.
\newblock arXiv:2312.04993 [math].

\bibitem[CFW81]{connes_amenable_1981}
A.~Connes, J.~Feldman, and B.~Weiss.
\newblock An amenable equivalence relation is generated by a single
  transformation.
\newblock {\em Ergodic Theory and Dynamical Systems}, 1(4):431--450, December
  1981.
\newblock Publisher: Cambridge University Press.

\bibitem[CK18]{chen_structurable_2018}
R.~Chen and A.~S. Kechris.
\newblock Structurable equivalence relations.
\newblock {\em Fundamenta Mathematicae}, 242:109--185, 2018.
\newblock Publisher: Instytut Matematyczny Polskiej Akademii Nauk.

\bibitem[CMMRS23]{calderoni2023anticlassification}
F.~{C}alderoni, D.~{M}arker, L.~{M}otto {R}os, and A.~{S}hani.
\newblock Anti-classification results for groups acting freely on the line.
\newblock {\em Advances in Mathematics}, 418:1089938, 2023.
\newblock Accessed as a preprint in 2020.

\bibitem[CR16]{clay_ordered_2016}
A.~Clay and D.~Rolfsen.
\newblock {\em Ordered {Groups} and {Topology}}, volume 176 of {\em Graduate
  {Studies} in {Mathematics}}.
\newblock American Mathematical Society, Providence, Rhode Island, November
  2016.

\bibitem[DJK94]{dougherty_structure_1994}
R.~Dougherty, S.~Jackson, and A.~S. Kechris.
\newblock The {Structure} of {Hyperfinite} {Borel} {Equivalence} {Relations}.
\newblock {\em Transactions of the American Mathematical Society},
  341(1):193--225, 1994.
\newblock Publisher: American Mathematical Society.

\bibitem[JKL02]{jackson_countable_2002}
S.~Jackson, A.~S. Kechris, and A.~Louveau.
\newblock {Countable} {Borel} {Equivalence} {Relations}.
\newblock {\em Journal of Mathematical Logic}, 02(01):1--80, May 2002.

\bibitem[Pou22]{poulin}
Antoine Poulin.
\newblock Complexity of the isomorphism of {Archimedean} orders on {Z}², 2022.
\newblock Publisher: McGill University.

\bibitem[PV08]{popa_cocycle_2008}
S.~Popa and S.~Vaes.
\newblock Cocycle and orbit superrigidity for lattices in {SL}(n,{R}) acting on
  homogeneous spaces.
\newblock In B.~Farb and D.~Fisher, editors, {\em Geometry, Rigidity, and Group
  Actions}, chapter~13, pages 419--451. University of Chicago Press, October
  2008.

\bibitem[Zim77]{zimmer_hyperfinite_1977}
R.~J. Zimmer.
\newblock Hyperfinite factors and amenable ergodic actions.
\newblock {\em Inventiones mathematicae}, 41(1):23--31, February 1977.

\bibitem[Zim78a]{zimmer_amenable_1978}
R.~J Zimmer.
\newblock Amenable ergodic group actions and an application to {Poisson}
  boundaries of random walks.
\newblock {\em Journal of Functional Analysis}, 27(3):350--372, March 1978.

\bibitem[Zim78b]{zimmer_induced_1978}
R.~J. Zimmer.
\newblock Induced and amenable ergodic actions of {Lie} groups.
\newblock {\em Annales scientifiques de l'École Normale Supérieure},
  11(3):407--428, 1978.

\bibitem[Zim81a]{zimmer_cohomology_1981}
R.~J. Zimmer.
\newblock On the {Cohomology} of {Ergodic} {Actions} of {Semisimple} {Lie}
  {Groups} and {Discrete} {Subgroups}.
\newblock {\em American Journal of Mathematics}, 103(5):937--951, 1981.
\newblock Publisher: Johns Hopkins University Press.

\bibitem[Zim81b]{zimmer_orbit_1981}
R.~J. Zimmer.
\newblock Orbit equivalence and rigidity of ergodic actions of {Lie} groups.
\newblock {\em Ergodic Theory and Dynamical Systems}, 1(2):237--253, June 1981.
\newblock Publisher: Cambridge University Press.

\bibitem[Zim84]{zimmer_ergodic_1984}
R.~J. Zimmer.
\newblock {\em Ergodic {Theory} and {Semisimple} {Groups}}.
\newblock Birkhäuser, Boston, MA, 1984.

\end{thebibliography}
\bibliographystyle{alpha}
\end{document}